\ifdefined\AddToHook
  \let\ORIlabel\label
  \let\ORIrefstepcounter\refstepcounter
  \AddToHook{package/hyperref/before}{%
    \let\label\ORIlabel
    \let\refstepcounter\ORIrefstepcounter}
\fi
\documentclass[onefignum,onetabnum]{siamart220329}

\usepackage[T1]{fontenc}
\usepackage[latin1]{inputenc}
\usepackage{caption}
\usepackage{algorithm,algpseudocode}
\usepackage{graphicx,url}
\usepackage{amsmath,amssymb,amsopn}
\usepackage{tabularx}
\usepackage{color}
\usepackage{calc}
\usepackage{longtable}
\usepackage{multirow}
\usepackage{mathrsfs}
\usepackage{array}
\usepackage{hyperref}
\usepackage{cleveref}
\usepackage{tikz}
\usepackage{booktabs}
\usepackage{pgfplotstable}
\usepackage{pgfplots}
\pgfplotsset{compat=1.17}
\usepackage[outline]{contour}
\usepackage{calrsfs}
\usepackage{graphicx}
\usepgfplotslibrary{groupplots}
\usetikzlibrary{matrix}
\usetikzlibrary{automata, shapes.geometric, fit}

\def \dv{\vec{d}}
\def \ev{\vec{e}}

\def \gv{\vec{g}}
\def \hv{\vec{h}}

\def \mv{\vec{m}}

\def \qv{\vec{q}}

\def \sv{\vec{s}}

\def \uv{\vec{u}}
\def \vv{\vec{v}}
\def \wv{\vec{w}}
\def \xv{\vec{x}}
\def \yv{\vec{y}}
\def \zv{\vec{z}}
\def \thetav{\vec{\theta}}

\def \Dv{\vec{D}}
\def \Ev{\vec{E}}

\def \Hv{\vec{H}}

\def \Kv{\vec{K}}

\def \Pv{\vec{P}}
\def \Qv{\vec{Q}}
\def \Rv{\vec{R}}

\def \Vv{\vec{V}}
\def \Wv{\vec{W}}

\def \rm{\mat{r}}

\def \R{\mathbb{R}}			

\let\Lpolish\L 
\def \L{\pazocal{L}}			%
 
\DeclareMathAlphabet{\pazocal}{OMS}{zplm}{m}{n}

\renewcommand{\vec}[1]{\boldsymbol{#1}}
\newcommand{\mat}[1]{\boldsymbol{{#1}}}

\algnewcommand\algorithmiconput{\textbf{Constants:}}
\algnewcommand\algorithmicinput{\textbf{Input:}}
\algnewcommand\algorithmicoutput{\textbf{Output:}}
\algnewcommand{\algorithmicgoto}{\textbf{go to}}%

\algnewcommand\Constants{\item[\algorithmiconput]}
\algnewcommand\Input{\item[\algorithmicinput]}%
\algnewcommand\Output{\item[\algorithmicoutput]}%
\algnewcommand{\Goto}[1]{\algorithmicgoto~\ref{#1}}%

\definecolor{myblack}{RGB}{53, 53, 53}
\definecolor{myblue}{RGB}{40, 75, 99}
\definecolor{myred}{RGB}{192, 50, 33}
\definecolor{myyellow}{RGB}{255, 166, 48}
\definecolor{mywhite}{RGB}{240, 237, 238}
\definecolor{mygreen}{RGB}{0, 102, 0}

\definecolor{green1}{RGB}{9, 82, 86}
\definecolor{green2}{RGB}{8, 127, 140}
\definecolor{green3}{RGB}{6, 167, 125}
\definecolor{green4}{RGB}{79, 109, 122}
\definecolor{green5}{RGB}{192, 214, 223}
\definecolor{violet}{RGB}{26,69,131}

\definecolor{checkgreen}{rgb}{0,0.6,0}
\definecolor{phase1}{rgb}{0.008,0.655,1.000}
\definecolor{phase2}{rgb}{0.016,0.75,0.700}
\definecolor{phase3}{rgb}{0.929,0.35,0.700}
\definecolor{icsyellow}{cmyk}{0.00,0.11,0.53,0.00}

\definecolor{blackmy}{RGB}{38, 70, 83}
\definecolor{bluemy}{RGB}{39, 125, 161}
\definecolor{greenmy}{RGB}{42, 167, 143}
\definecolor{yellowmy}{RGB}{233, 196, 106}
\definecolor{brownmy}{RGB}{244, 162, 97}
\definecolor{redmy}{RGB}{249, 65, 68}

\definecolor{darkbluemy}{RGB}{65, 59, 147}
\definecolor{lightbluemy}{RGB}{71, 139, 194}
\definecolor{greenmy}{RGB}{98, 173, 153}
\definecolor{darkorangemy}{RGB}{230, 142, 52}
\definecolor{lightorangemy}{RGB}{217, 172, 59}

\definecolor{blue1}{RGB}{1, 42, 74}
\definecolor{blue2}{RGB}{1, 73, 124}
\definecolor{blue3}{RGB}{42, 111, 151}
\definecolor{blue4}{RGB}{44, 125, 160}
\definecolor{blue5}{RGB}{70, 143, 175}
\definecolor{blue6}{RGB}{137, 194, 217}

\definecolor{red1}{RGB}{204, 68, 75}
\definecolor{red2}{RGB}{218, 85, 82}
\definecolor{red3}{RGB}{227, 150, 149}
\definecolor{red4}{RGB}{228, 190, 171}

\definecolor{brown1}{RGB}{92,178,112}
\definecolor{brown2}{RGB}{130,194,110}		
\definecolor{brown3}{RGB}{163, 193, 173}

\usepackage[normalem]{ulem}
\usepackage{soul,xcolor}
\usepackage[many]{tcolorbox}
\usepackage{mathtools}
\usepackage{algorithm}
\usepackage{enumitem}
\usepackage{listings}
\usepackage{relsize}
\usepackage{nameref}

\usepackage{longtable}

\newcommand{\algorithmiccommentMine}[1]{\bgroup\hfill$\triangleright$~{ \textcolor{gray}{#1}}\egroup}
\newcommand\COMMENTmine[1]{\algorithmiccommentMine{#1}}

\newcommand\oldtext[1]{}
\newcommand\cancel[1]{}

\newcommand\oldtextt[1]{}

\usetikzlibrary{spy}
\usepackage{pgfplots}
\usepgfplotslibrary{fillbetween}

\usepackage{array}
\newcommand{\PreserveBackslash}[1]{\let\temp=\\#1\let\\=\temp}
\newcolumntype{C}[1]{>{\PreserveBackslash\centering}p{#1}}
\newcolumntype{R}[1]{>{\PreserveBackslash\raggedleft}p{#1}}
\newcolumntype{L}[1]{>{\PreserveBackslash\raggedright}p{#1}}

\DeclareMathAlphabet\bpazocal{OMS}{cmsy}{b}{n}
\usetikzlibrary{patterns}

\usetikzlibrary{arrows.meta,positioning}
\tikzset{every path/.style={draw opacity=1}, every node/.style={text opacity=1}}

\newsiamthm{assumption}{Assumption}
\crefname{assumption}{Assumption}{Assumptions}
\newsiamremark{remark}{Remark}
\newsiamremark{example}{Example}
\crefname{example}{Example}{Examples}

\headers{Safeguarded neurally-accelerated preconditioning}{Y.~Lee, G.~E.~Karniadakis, and A.~Kopani\v{c}\'akov\'a}

\title{Safeguarded Neurally-Accelerated Preconditioning (SNAP) framework for  Nonlinear Optimization\thanks{Submitted to the editors \today.
\funding{This work benefited from ANITI (Grant No.~ANR-23-IACL-0002).
The numerical results were carried out using HPC resources from GENCI-IDRIS (Grant No.~AD011015766R1).}}}

\author{Youngkyu Lee\thanks{Department of Mathematics, Kyung Hee University, Seoul, 02447, Republic of Korea, and Division of Applied Mathematics, Brown University, Providence, RI 02906, USA (\email{youngkyu\_lee@khu.ac.kr}).}
\and George Em Karniadakis\thanks{Division of Applied Mathematics, Brown University, Providence, RI 02906, USA (\email{george\_karniadakis@brown.edu}).}
\and Alena Kopani\v{c}\'akov\'a\thanks{Corresponding author. Department of Computer Science, University of Geneva, Route de Drize 7, 1227 Carouge, Switzerland, and Artificial and Natural Intelligence Toulouse Institute (ANITI), 3 Rue Tarfaya, 31400 Toulouse, France (\email{Alena.Kopanicakova@unige.ch}).}}

\ifpdf
\hypersetup{
  pdftitle={Safeguarded Neurally-Accelerated Preconditioning (SNAP) framework for  Nonlinear Optimization},
  pdfauthor={Y. Lee, G. E. Karniadakis, A. Kopanicakova}
}
\fi

\begin{document}

\maketitle

\begin{abstract}
We introduce the Safeguarded Neurally-Accelerated Preconditioning (SNAP) framework for hybridizing nonlinear optimization methods with pre-trained neural operators (NOs).
SNAP targets parametric optimization problems arising from variational formulations of nonlinear partial differential equations (PDEs), where many parameter instances require repeated solves.
Inspired by multilevel line-search methods, SNAP operates in cycles, combining classical optimization steps with a search direction generated by a pre-trained NO.
Since NO-generated search directions carry no a priori descent guarantee, SNAP employs an a posteriori safeguard by accepting only corrections that satisfy prescribed descent and acceptance conditions.
We prove global convergence of the SNAP algorithm and show that it preserves the convergence rate of the underlying optimization method independently of the employed NO architecture.
Moreover, we also show that when the NO corrections are sufficiently accurate, SNAP converges faster, with the improvement directly related to the NO training error.
To demonstrate SNAP's efficiency in practice, we run numerical experiments on a diverse set of nonlinear parametric optimization problems, including parametric minimal surface, Allen--Cahn, and nonlinear Helmholtz problems.
The efficiency of SNAP is demonstrated by comparing two NO architectures, Fourier neural operator (FNO) and Transolver, and by using SNAP as a standalone solver and as a nonlinear preconditioner.
Our numerical results show a substantial reduction in the number of required iterations, while demonstrating robustness and generalization across a wide range of problem parameters and mesh resolutions.
\end{abstract}

\begin{keywords}
nonlinear optimization, scientific machine-learning, parametric PDEs, operator learning, hybrid algorithms
\end{keywords}

\begin{AMS}
65K10, 65M55, 68T05, 65J15
\end{AMS}

\begin{sloppypar}
\section{Introduction}
\label{sec:numerical_solution}
In this work, we consider nonlinear parameter-dependent partial differential equations (PDEs) with a variational structure, whose discretization gives rise to the family of the following minimization problems:
\begin{align}
\uv^\star(\boldsymbol{\lambda})
\in
\operatorname*{argmin}_{\uv\in\R^n}
\Psi(\uv;\boldsymbol{\lambda}),
\qquad
\boldsymbol{\lambda}\in\Lambda,
\label{eq:dp_merit}
\end{align}
where $\Psi(\cdot; \boldsymbol{\lambda})\colon\R^n\rightarrow\R$ is a twice continuously differentiable, possibly non-convex, discrete energy functional, and $n$ denotes the number of degrees of freedom (dofs).
The vector $\boldsymbol{\lambda}\in\Lambda$ denotes the problem parameters. 
Here, $\Lambda\subset\R^{n_{\lambda}}$, with $n_{\lambda}\geq1$, is a compact set of admissible parameters.
Finally, $\uv^\star(\boldsymbol{\lambda})\in\R^n$ denotes a local minimizer of $\Psi(\cdot;\boldsymbol{\lambda})$ for a given parameter vector $\boldsymbol{\lambda}$.

Problems of this type arise in many scientific and engineering applications, where the numerical solution must be obtained repeatedly for varying problem parameters, e.g., material properties, boundary conditions, or source terms.
A classical approach for solving~\eqref{eq:dp_merit} is to employ first- or second-order optimization methods~\cite{nocedal2006numerical}.
First-order methods have low per-iteration cost but typically converge slowly for ill-conditioned problems.
Second-order Newton methods~\cite{nocedal2006numerical} are well known for local quadratic convergence, but their convergence is not global and it is therefore generally guaranteed only sufficiently close to a solution.
Consequently, they are commonly equipped with globalization strategies, such as line-search (LS) or trust-region (TR) methods~\cite{deuflhard2011newton}, to ensure convergence from remote initial guesses.
Moreover, each Newton iteration requires solving a large-scale linear system of equations.
These systems are commonly solved using Krylov subspace methods~\cite{saad2003iterative}, whose convergence can be accelerated by means of preconditioning.
In particular, multigrid (MG)~\cite{trottenberg2000multigrid} and domain-decomposition (DD) methods~\cite{toselli2005domain} are widely used as they often provide scalability and problem-size independent convergence.

As an alternative to solving the linearized systems arising within Newton's method, one may apply MG or DD techniques directly to the nonlinear problem.
Prominent nonlinear optimization-based MG approaches include MG/OPT~\cite{Nash2000multigrid}, recursive multilevel trust-region (RMTR) methods~\cite{gratton2006numerical,Gratton2008recursive} or multilevel cubic regularization~\cite{calandra2021high}.
Similarly, nonlinear DD-based optimization methods have been developed in the literature.
For example, additively preconditioned LS/TR methods (APLS/APTR) methods~\cite{gross2009unifying}, DD-TR~\cite{gratton2019optimization}, or G-ASPIN~\cite{gross2021globalization}.
As in the linear setting, these nonlinear MG and DD optimization methods can be used either as standalone solvers or as nonlinear preconditioners for nonlinear Krylov or acceleration methods~\cite{brune2015composing}, e.g., nonlinear GMRES or Anderson acceleration.

While nonlinear MG and DD methods can be highly effective, their performance often depends on problem-specific components, such as hierarchical and domain decompositions, smoothers, coarse spaces, transfer operators, and subdomain solvers.
Designing these components however require considerable expertise, and effective choices may not transfer across problem classes or parameter regimes.
This motivates the development of hybrid, machine-learning (ML)-enhanced, optimization methods that use ML models to improve problem-dependent components while retaining the robustness and convergence mechanisms of established optimization techniques.

The use of ML to accelerate iterative methods has recently attracted significant interest in the literature.
For instance, ML has been employed for solver selection~\cite{bhowmick2006application,motter2015lighthouse},
for choosing appropriate solver parameters~\cite{hospedales2021meta,margenberg2022neural,antonietti2021accelerating}, and for providing suitable initial guesses~\cite{eshaghi2025nows,jin2025fast,novello2024accelerating}.
More tightly coupled approaches integrate learned components directly into the design of iterative algorithms. 
This includes for example learned update rules~\cite{hao2024newton,hsieh2019learning,tompson2017accelerating}, solver-in-the-loop strategies~\cite{doncevic2022recursively,um2020solver}, or learned Krylov search directions~\cite{kaneda2022deep,luo2024neural}.
Alternative approaches include data-driven preconditioners based on learned sparsity patterns, approximate inverses, graph representations, or problem-specific structure, see for example~\cite{dimola2025neural,dimola2025numerical,herb2026accelerating,rudikov2024neural,wang2026learnable}.

ML has also been increasingly incorporated into linear DD and MG methods.
For example, in the DD methods, this includes replacing local subdomain solves by learned surrogates~\cite{lee2025nonoverlapping,li2019d3m,li2020deep}, constructing adaptive coarse spaces~\cite{ciaramella2022spectral,heinlein2019machine,klawonn2022learning}, or learning interface transmission conditions~\cite{taghibakhshi2022learning,taghibakhshi2023mggnn}.
In MG, ML has been used, for instance, to construct problem-adaptive smoothers~\cite{li2026level,lyu2026multigrid,wang2025momentum,xie2025mgcfnn} or transfer operators~\cite{holguin2024accelerating,li2025neural,wang2023learning,weymouth2022data}.
A particularly active research direction combines ML models with classical relaxation methods.
While the relaxation methods reduce high-frequency error components, the ML models target the remaining low-frequency components, effectively forming a two-level or multilevel hybrid method~\cite{azulay2022multigrid,cui2022fourier,cui2025hybrid,cui2025neural,kopanivcakova2024deeponet,lee2026hybrid,lee2025fast,sun2025learning,zhang2024blending}.

For nonlinear problems, most existing approaches use ML to provide an initial guess~\cite{aghili2025accelerating,eshaghi2025nows,jin2025fast}, including recent work on spectrally safe neural-operator warm starts for large-scale Newton solvers~\cite{oh2026spectrally}.
Approaches that directly intertwine the nonlinear methods with ML remain comparatively rare. 
Exceptions include for instance POD-based preconditioning of the linearized Newton system~\cite{luo2023preconditioned}, and neural-operator-preconditioned Newton methods~\cite{lee2025neural}.
In addition, the authors of~\cite{zhang2026domain} recently proposed online learning for identifying nonlinear elimination subsets within a parallel DD framework.
These recent developments demonstrate the potential of ML-enhanced nonlinear methods.
However, the systematic integration of ML into nonlinear optimization with convergence guarantees remains largely unexplored.

Moreover, despite the growing literature on ML-accelerated iterative methods, their theoretical understanding remains limited.
Existing analyses have primarily focused on linear problems, including hybrid neural-operator (NO) methods combined with classical relaxation schemes~\cite{cui2025hybrid,hu2025hybrid}.
In particular, these works establish convergence properties by exploiting the spectral structure of the underlying linear iteration.
Even less is understood when learned corrections are incorporated into nonlinear optimization methods, where they may fail to provide descent direction and can therefore compromise the convergence of the underlying solver.
This motivates the development of principled safeguards and convergence guarantees for learned nonlinear corrections, which is a central focus of this work.

In particular, we propose the Safeguarded Neurally-Accelerated Preconditioning (SNAP) framework for hybridizing nonlinear optimization methods with NOs.
The SNAP framework is designed for parametric problems of the form~\eqref{eq:dp_merit}, where the same optimization problem must be solved repeatedly for many parameter instances.
To this end, the NO learns shared structure across the solution manifold ${\{\uv^\star(\boldsymbol{\lambda}) : \boldsymbol{\lambda}\in\Lambda\}}$, which enables generalization to unseen parameters and allows the one-time training cost to be amortized over repeated solves. 

SNAP's algorithmic design is inspired by the multilevel LS method MG/OPT~\cite{lewis2005model,nash2014properties}, which uses search directions from low-resolution or low-fidelity models to accelerate optimization of a high-resolution or high-fidelity model.
In the same spirit, SNAP uses corrections generated by a pre-trained NO to speed up optimization of a given problem. 
However, unlike low-resolution or low-fidelity models, the NO's model does not generally preserve first-order consistency~\cite{kopanicakova2022use} with the original optimization problem.
As a result, the NO-induced corrections are not guaranteed to be a descent direction.
SNAP therefore employs an a posteriori safeguard that accepts learned corrections only when they satisfy prescribed conditions.

This construction lets us establish global convergence of the SNAP framework independently of the chosen NO architecture, while ensuring that accepted NO corrections never increase the objective function value.
We also analyze how the accuracy of the learned corrections affects the reduction in the objective function and relate this accuracy to the NO training error, establishing a direct connection between the learning loss and SNAP's convergence accuracy.

Our numerical experiments demonstrate the effectiveness of the SNAP framework across a range of nonlinear parametric problems, including minimal surface, nonlinear Helmholtz, Allen--Cahn, and jumping-coefficient $p$-Laplacian problems.
We evaluate SNAP, configured with two NO architectures (Fourier neural operator (FNO) and Transolver), both as standalone iterative methods and as nonlinear preconditioners for nonlinear GMRES.
Our numerical results show a substantial reduction in the number of required iterations, while demonstrating robustness and generalization across a wide range of problem parameters and mesh resolutions.

The remainder of this paper is organized as follows.
Section~\ref{sec:SNAP} introduces the SNAP framework, its algorithm, and convergence properties.
Section~\ref{sec:onet_precond} presents the two NO architectures considered in this work, FNO and Transolver, and describes how they are integrated into SNAP and trained.
Afterward, Section~\ref{sec:impl} presents the employed benchmark problems and implementation details, while Section~\ref{sec:num_results} reports the numerical results.
Finally, Section~\ref{sec:conclusion} summarizes the main findings and discusses directions for future work.
\section{Safeguarded Neurally-Accelerated Preconditioning (SNAP) framework}
\label{sec:SNAP}

\begin{figure}[t]
\centering
\includegraphics{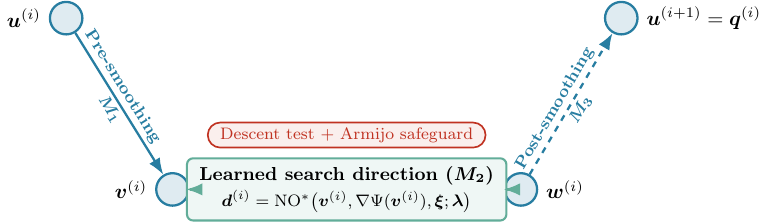}
\caption{
Schematic representation of one SNAP cycle.
Pre-smoothing ($M_1$) is followed by an NO-generated search direction ($M_2$) equipped with an a posteriori safeguard and line search, and an optional post-smoothing step ($M_3$).
}
\label{fig:snap_cycle}
\end{figure}

In this work, we propose the Safeguarded Neurally-Accelerated Preconditioning (SNAP) framework for efficiently solving the minimization problem~\eqref{eq:dp_merit}.
SNAP augments a nonlinear optimization method of a user choice with search directions generated by a pre-trained NO, resulting in a hybrid optimization scheme.
This proposed framework is inspired by the multilevel LS method MG/OPT~\cite{lewis2005model,nash2014properties}, which uses low-resolution or low-fidelity models to accelerate convergence of a given optimization scheme. 
In contrast to MG/OPT, SNAP replaces low-resolution or low-fidelity models with a pre-trained NO.
However, unlike the models used in MG/OPT, the NO does not, in general, preserve first-order consistency with the original objective function~\cite{kopanicakova2022use}.
Consequently, the classical MG/OPT convergence mechanisms for ensuring descent cannot be applied directly.
Instead, SNAP employs an a posteriori safeguard together with a LS globalization strategy to retain convergence of the hybrid iteration.

For the description of SNAP, we fix $\boldsymbol{\lambda}\in\Lambda$ and suppress its dependence from the notation, writing $\Psi(\uv)$ for the objective and $\uv^\star$ for a local minimizer.
The parametric dependence is retained through the NO, which is trained across $\Lambda$ as described in Section~\ref{subsec:training_ons}.
At each $i$-th iteration, SNAP updates the current iterate $\uv^{(i)}\in\R^n$ through a composite step consisting of three stages: pre-smoothing, learned search direction, and post-smoothing.
These stages are represented by the abstract maps $M_j(\nabla\Psi,\uv)\in\R^n$, $j\in\{1,2,3\}$, each of which takes the current iterate and gradient information and returns a search direction.

Starting from $\uv^{(i)}$, a few pre-smoothing iterations based on $M_1$ are first performed with suitable step sizes, producing an intermediate iterate $\vv^{(i)} \in\R^n$.
Here, we use the term \emph{pre-smoothing} following the MG terminology underlying MG/OPT, where this step is traditionally used to reduce high-frequency error components before applying a coarse-level search direction.
In SNAP, the primary role of $M_1$ is to reduce the value of the objective function $\Psi$ before applying the NO's search direction $M_2$. 
However, in practice, $M_1$ is also freqently chosen to effectively reduce high-frequency error components.
The precise requirements on $M_1$ are stated in Assumptions~\ref{ass:smoother_mono} and~\ref{ass:smoother_dec} below and can, for example, be satisfied by gradient-based methods with a LS strategy.

Using the pre-smoothed iterate $\vv^{(i)}$, the pre-trained NO generates the candidate search direction
$\dv^{(i)}$ as
\begin{equation}
\dv^{(i)}
:=
M_{2}(\nabla\Psi,\vv^{(i)})
=
\mathrm{NO}^{\ast}
\bigl(\vv^{(i)},\nabla\Psi(\vv^{(i)}),\boldsymbol{\xi};\boldsymbol{\lambda}\bigr),
\label{eq:dp_dir}
\end{equation}
where $\mathrm{NO}^{\ast}$ denotes the trained NO.
Here, the symbol $\boldsymbol{\xi}$ stands for the nodal coordinates corresponding the iterate, and $\boldsymbol{\lambda}$ denotes the problem parameters.
As will be discussed in Section~\ref{sec:onet_precond}, this search direction requires neither the assembly nor the inversion of the Hessian and can therefore be performed at low computational cost.

However, as the quality of the NO's search direction is not known a priori, it must be assessed \emph{a posteriori}.
To this end, we first verify that $\dv^{(i)}$ is a descent direction, i.e., that~$\langle \nabla\Psi(\vv^{(i)}),\dv^{(i)}\rangle<0$.
If this condition is not satisfied, the search direction is rejected and we set~$\wv^{(i)}=\vv^{(i)}$.
Otherwise, the step size $\eta_2^{(i)}$ is determined by a backtracking LS satisfying the standard Armijo condition~\cite{nocedal2006numerical}, given as
\begin{equation}
\Psi(\vv^{(i)}+\eta_2^{(i)}\dv^{(i)})
\le
\Psi(\vv^{(i)})
+
c_{1}\,\eta_2^{(i)}\,\langle \nabla\Psi(\vv^{(i)}),\dv^{(i)}\rangle,
\qquad
\eta_2^{(i)}\in\{1,\beta,\beta^{2},\dots\}.
\label{eq:dp_accept}
\end{equation}
Both the descent test and the acceptance criterion~\eqref{eq:dp_accept} require only evaluations of the objective function and its gradient, hence they are Hessian-free.
Furthermore, since the accepted direction satisfies $\langle\nabla\Psi(\vv^{(i)}),\dv^{(i)}\rangle<0$, every accepted step yields a strict decrease in the objective function $\Psi$.

The composite SNAP step is concluded by an optional post-smoothing step using $M_3$ with the step size $\eta_3^{(i)}$.
The update produced by $M_3$ is required to not increase the objective.
In practice, $M_3$ and $M_1$ are often chosen to be the same.
The complete SNAP procedure is summarized in Algorithm~\ref{alg:mgopt}.

\begin{algorithm}[htb!]
\caption{SNAP($\Psi, \uv^{(i)}, M_1, M_2, M_3$)}
\label{alg:mgopt}
\begin{algorithmic}[1]
\Require $\Psi:\R^n \rightarrow \R$; $\uv^{(i)} \in \R^n$; $M_{1}$; $M_{2}$; $M_{3}$; smoothing step sizes $\eta_{1}^{(i)}, \eta_{3}^{(i)}$
\Constants $c_{1},\beta\in(0,1)$; $\eta_{\min}\in(0,1]$

\State $\vv^{(i)} \gets \uv^{(i)} + \eta_{1}^{(i)}\,M_{1}(\nabla \Psi, \uv^{(i)})$ \COMMENTmine{Pre-smoothing}
\State $\dv^{(i)} \gets M_{2}(\nabla \Psi, \vv^{(i)})$ \COMMENTmine{NO-induced search direction}
\If{$\langle \nabla\Psi(\vv^{(i)}),\,\dv^{(i)}\rangle < 0$} \COMMENTmine{Descent-direction test}
    \State $\eta_2^{(i)} \gets 1$
    \While{$\eta_2^{(i)} \ge \eta_{\min}$ \textbf{and} $\Psi(\vv^{(i)}+\eta_2^{(i)}\dv^{(i)}) > \Psi(\vv^{(i)}) + c_{1}\,\eta_2^{(i)}\,\langle \nabla\Psi(\vv^{(i)}),\dv^{(i)}\rangle$}
        \State $\eta_2^{(i)} \gets \beta\,\eta_2^{(i)}$ \COMMENTmine{Armijo backtracking}
    \EndWhile
    \If{$\eta_2^{(i)} \ge \eta_{\min}$}
        \State $\wv^{(i)} \gets \vv^{(i)} + \eta_2^{(i)}\,\dv^{(i)}$ \COMMENTmine{Acceptance of $\dv^{(i)}$ with $\eta_2^{(i)}$}
    \Else
        \State $\wv^{(i)} \gets \vv^{(i)}$ \COMMENTmine{Rejection of NO's search direction}
    \EndIf
\Else
    \State $\wv^{(i)} \gets \vv^{(i)}$ \COMMENTmine{Rejection of NO's search direction}
\EndIf
\State $\uv^{(i+1)} = \qv^{(i)} \gets \wv^{(i)} + \eta_{3}^{(i)}\,M_{3}(\nabla \Psi, \wv^{(i)})$ \COMMENTmine{Post-smoothing}
\State \Return $\uv^{(i+1)}$
\end{algorithmic}
\end{algorithm}

\begin{remark}
For quadratic programming problems, when $M_1$ is a standard smoother, such as Jacobi or Gauss--Seidel, and $M_2$ is represented by a DeepONet~\cite{lu2021learning}, SNAP can be seen as a safeguarded, globalized extension of HINTS~\cite{zhang2024blending} and of direct-preconditioning (DP) framework proposed in~\cite{kopanivcakova2024deeponet}.
\end{remark}

\subsection{Global convergence and baseline convergence rate}
To analyze the convergence behavior of SNAP, we consider the following standing assumptions.

\begin{assumption}
\label{ass:smooth}
The function $\Psi$ is twice continuously differentiable on an open convex set $D\subseteq\R^{n}$ containing the iterates, and $\uv^{\star}\in D$ satisfies $\nabla\Psi(\uv^{\star})=\boldsymbol{0}$.
\end{assumption}

\begin{assumption}
\label{ass:lip}
The Hessian $\nabla^{2}\Psi$ is Lipschitz continuous with constant $L$ on a convex neighborhood $N\subseteq D$ of $\uv^{\star}$.
\end{assumption}

\begin{assumption}
\label{ass:spd}
$\nabla^{2}\Psi(\uv^{\star})$ is positive definite and, shrinking $N$ if necessary,
\begin{equation}
0\ <\ \underline{\lambda}\,\|\zv\|_{2}^{2}\ \le\ \langle \nabla^{2}\Psi(\yv)\,\zv,\zv\rangle\ \le\ \overline{\lambda}\,\|\zv\|_{2}^{2}
\qquad\text{for all }\yv\in N,\ \zv\in\R^{n}.
\label{eq:dp_spectral}
\end{equation}
Moreover, the states $\uv^{(i)},\vv^{(i)},\wv^{(i)}$ generated by Algorithm~\ref{alg:mgopt} are assumed to remain in $N$.
\end{assumption}

In addition, we require the following assumptions on smothers $M_1$ and $M_3$. 

\begin{assumption}
\label{ass:smoother_mono}
The step lengths $\eta_{1}^{(i)},\eta_{3}^{(i)}$ in Algorithm~\ref{alg:mgopt} are chosen such that the pre- and post-smoothing steps, performed using $M_{1}$ and $M_{3}$, do not increase the objective $\Psi$, e.g., by a line search that admits the zero step length.
\end{assumption}

\begin{assumption}
\label{ass:smoother_dec}
There exists a fixed $\gamma>0$ such that each pre-smoothing step provides the sufficient decrease
\begin{equation}
\Psi\big(\uv^{(i)}+\eta_{1}^{(i)} M_{1}(\nabla\Psi,\uv^{(i)})\big)\ \le\ \Psi(\uv^{(i)})-\gamma\,\big\|\nabla\Psi(\uv^{(i)})\big\|_{2}^{2}.
\label{eq:dp_smoother_decrease}
\end{equation}
\end{assumption}
Assumption~\ref{ass:smoother_dec} can be fullfilled for example by utilizing LS smoothers with gradient-based search directions~\cite{nocedal2006numerical}.

Let us write $\ev:=\uv^{\star}-\vv$ for the error at an iterate $\vv$.
Furthermore, we define the \emph{optimality gap} at $\vv$ as
\begin{equation}
G(\vv)\ :=\ \Psi(\vv)-\Psi(\uv^{\star}), \quad \forall \vv \in N.
\label{eq:dp_gap}
\end{equation}

Expanding $\Psi$ near $\uv^{\star}$ and using $\nabla\Psi(\uv^{\star})=\boldsymbol{0}$, Assumption~\ref{ass:lip} gives
\begin{equation}
G(\vv)\ =\ \tfrac12\,\|\vv-\uv^{\star}\|_{\Hv^{\star}}^{2}\ +\ R(\vv),
\qquad
|R(\vv)|\ \le\ \tfrac{L}{6}\,\|\vv-\uv^{\star}\|_{2}^{3},
\label{eq:dp_merit_expansion}
\end{equation}
where  $\Hv^{\star}:=\nabla^{2}\Psi(\uv^{\star})$.
The integral form of the same expansion together with the spectral bounds~\eqref{eq:dp_spectral} yields the exact two-sided estimate
\begin{equation}
\tfrac12\,\underline{\lambda}\,\|\vv-\uv^{\star}\|_{2}^{2}\ \le\ G(\vv)\ \le\ \tfrac12\,\overline{\lambda}\,\|\vv-\uv^{\star}\|_{2}^{2},
\qquad \forall \vv \in N.
\label{eq:dp_gap_bounds}
\end{equation}
Thus, $G\ge0$ on $N$.

In addition, our analysis requires the gradient to dominate the optimality gap.
Thus, we also utilize the following Polyak--{\Lpolish}ojasiewicz (PL) inequality, introduced by Polyak~\cite{polyak1963gradient} for nonconvex functions and widely used to establish linear convergence rates of first-order descent methods without convexity~\cite{karimi2016linear}; see also~\cite[Section~3.8]{wright2022optimization}.

\begin{assumption}
\label{ass:pl}
There exists $\mu>0$ such that
\begin{equation}
\tfrac12\,\big\|\nabla\Psi(\vv)\big\|_{2}^{2}\ \ge\ \mu\,G(\vv)
\qquad \forall \vv \in N .
\label{eq:dp_pl}
\end{equation}
\end{assumption}
Note, under Assumptions~\ref{ass:lip}--\ref{ass:spd}, the integral form of the Taylor expansion gives $\|\nabla\Psi(\vv)\|_{2}\ge\underline{\lambda}\,\|\vv-\uv^{\star}\|_{2}$ on $N$, which combined with the upper bound in~\eqref{eq:dp_gap_bounds} yields~\eqref{eq:dp_pl} with $\mu=\underline{\lambda}^{2}/\overline{\lambda}$.
The assumption~\eqref{eq:dp_pl} is, however, strictly weaker than Assumption~\ref{ass:spd}, as it also allows non-isolated minimizers.

Note, under Assumption~\ref{ass:spd}, $\Psi$ is strongly convex on the convex set $N\ni\uv^{\star}$, so that for all $\vv\in N$
\begin{equation*}
\Psi(\uv^{\star})\ \ge\ \Psi(\vv)+\langle\nabla\Psi(\vv),\uv^{\star}-\vv\rangle+\tfrac{\underline{\lambda}}{2}\|\uv^{\star}-\vv\|_{2}^{2}\ \ge\ \Psi(\vv)-\tfrac{1}{2\underline{\lambda}}\|\nabla\Psi(\vv)\|_{2}^{2},
\end{equation*}
i.e., \eqref{eq:dp_pl} holds with $\mu=\underline{\lambda}$.
Assumption~\ref{ass:pl} is therefore implied by Assumption~\ref{ass:spd}.

\begin{lemma}
\label{lem:dp_global}
Let Assumptions~\ref{ass:smooth} and~\ref{ass:smoother_mono} hold, and let each iteration take the step of Algorithm~\ref{alg:mgopt}.
Then, for any NO oracle $M_{2}$, the sequence $\{\Psi(\uv^{(i)})\}_{i\ge0}$ is non-increasing.
If, in addition, $\Psi$ is bounded from below on $D$ and Assumption~\ref{ass:smoother_dec} holds, then $\nabla\Psi(\uv^{(i)})\to0$ as $i\to\infty$, independently of the direction quality.
\end{lemma}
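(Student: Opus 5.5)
The plan is to track the objective value through the three stages of one SNAP cycle and then telescope.

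\emph{Monotonicity.} Fix $i$ and compare $\Psi(\uv^{(i)})$, $\Psi(\vv^{(i)})$, $\Psi(\wv^{(i)})$ and $\Psi(\uv^{(i+1)})$. By Assumption~\ref{ass:smoother_mono}, the pre-smoothing step gives $\Psi(\vv^{(i)})\le\Psi(\uv^{(i)})$. For the NO stage I distinguish the branches of Algorithm~\ref{alg:mgopt}:
\begin{itemize}
\item if the descent test fails, or the backtracking drives $\eta_2^{(i)}$ below $\eta_{\min}$, then $\wv^{(i)}=\vv^{(i)}$, so the value is unchanged;
\item otherwise the while loop has terminated with the Armijo inequality~\eqref{eq:dp_accept} satisfied and $\eta_2^{(i)}>0$, and together with $\langle\nabla\Psi(\vv^{(i)}),\dv^{(i)}\rangle<0$ this gives $\Psi(\wv^{(i)})<\Psi(\vv^{(i)})$.
\end{itemize}
In the second case I will justify the loop exit carefully. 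The loop ends either because $\eta_2^{(i)}<\eta_{\min}$, which is the rejection branch, or because the Armijo condition holds at the current $\eta_2^{(i)}\ge\eta_{\min}$. Finiteness is automatic, since $\beta^k\to0$ forces $\eta_2^{(i)}<\eta_{\min}$ eventually. The post-smoothing step again does not increase $\Psi$ by Assumption~\ref{ass:smoother_mono}. Chaining the three inequalities gives
\[
\Psi(\uv^{(i+1)})\le\Psi(\wv^{(i)})\le\Psi(\vv^{(i)})\le\Psi(\uv^{(i)}).
\]
Nothing about $\dv^{(i)}$ beyond the a posteriori tests is used, so this holds for any oracle $M_2$.

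\emph{Gradient convergence.} Assumption~\ref{ass:smoother_dec} sharpens the first inequality, and combining it with the chain above yields
\[
\Psi(\uv^{(i+1)})\le\Psi(\uv^{(i)})-\gamma\|\nabla\Psi(\uv^{(i)})\|_2^2 .
\]
Summing from $i=0$ to $K$ telescopes to
\[
\gamma\sum_{i=0}^{K}\|\nabla\Psi(\uv^{(i)})\|_2^2\le\Psi(\uv^{(0)})-\Psi(\uv^{(K+1)})\le\Psi(\uv^{(0)})-\inf_D\Psi<\infty,
\]
where the lower bound on $D$ applies because the iterates stay in $D$ by Assumption~\ref{ass:smooth}. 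The series therefore converges, its terms tend to zero, and hence $\nabla\Psi(\uv^{(i)})\to\boldsymbol 0$. This uses no property of $M_2$.

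\emph{Main obstacle.} The only delicate point is the case analysis of the safeguarded NO step. I must make sure every branch yields $\Psi(\wv^{(i)})\le\Psi(\vv^{(i)})$, including the accepted step with $\eta_2^{(i)}\ge\eta_{\min}$, where I need that the Armijo inequality actually holds on exit. Everything else is a standard telescoping argument. I will also note that strict decrease on acceptance is a bonus and is not needed for the result.
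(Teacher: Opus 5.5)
Your proposal is correct and follows the paper's proof essentially step for step. Both arguments establish the chain $\Psi(\uv^{(i+1)})\le\Psi(\wv^{(i)})\le\Psi(\vv^{(i)})\le\Psi(\uv^{(i)})$ from Assumption~\ref{ass:smoother_mono} and the safeguard branches, then sharpen the first link via Assumption~\ref{ass:smoother_dec} and telescope against $\inf_D\Psi$; your explicit check that the while loop terminates, and that the Armijo inequality holds whenever $\eta_2^{(i)}\ge\eta_{\min}$ on exit, is a small but welcome refinement of what the paper simply asserts.
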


\begin{proof}
We first show that no step of Algorithm~\ref{alg:mgopt} increases the objective.
An accepted search direction satisfies~\eqref{eq:dp_accept} with $\eta_{2}^{(i)}\in[\eta_{\min},1]$ and $\langle \nabla\Psi(\vv^{(i)}),\dv^{(i)}\rangle<0$, whence ${\Psi(\wv^{(i)})\le\Psi(\vv^{(i)})+c_{1}\,\eta_{2}^{(i)}\,\langle \nabla\Psi(\vv^{(i)}),\dv^{(i)}\rangle<\Psi(\vv^{(i)})}$. 
A rejected search direction leaves the iterate unchanged, i.e., $\wv^{(i)}=\vv^{(i)}$ and the smoother steps do not increase $\Psi$ by Assumption~\ref{ass:smoother_mono}.
Consequently,
$\Psi(\uv^{(i+1)})=\Psi(\qv^{(i)})\le\Psi(\wv^{(i)})\le\Psi(\vv^{(i)})\le\Psi(\uv^{(i)})$ for every $i$, which is the claimed monotonicity.
Under Assumption~\ref{ass:smoother_dec}, the pre-smoothing step alone gives $\Psi(\uv^{(i+1)})\le\Psi(\vv^{(i)})\le\Psi(\uv^{(i)})-\gamma\,\|\nabla\Psi(\uv^{(i)})\|_{2}^{2}$.
Summing over $i=0,\dots,N$ and telescoping,
\begin{equation*}
\gamma\sum_{i=0}^{N}\big\|\nabla\Psi(\uv^{(i)})\big\|_{2}^{2}\ \le\ \Psi(\uv^{(0)})-\Psi(\uv^{(N+1)})\ \le\ \Psi(\uv^{(0)})-\inf_{D}\Psi\ <\ \infty ,
\end{equation*}
so the series converges as $N\to\infty$ and its terms vanish, $\nabla\Psi(\uv^{(i)})\to0$.
\end{proof}

\begin{remark}
\label{rem:dp_unbounded}
The second part of Lemma~\ref{lem:dp_global} requires $\Psi$ to be bounded from below on the set $D$ of Assumption~\ref{ass:smooth}, which contains all iterates.
 For energies unbounded from below on $\R^n$, such as those arising in the Bratu-Gelfand and nonlinear Helmholtz problems of Section~\ref{sec:impl}, this condition holds if $D$ is bounded, since $\Psi$ is continuous on the closure of $D$.
 However, monotone decrease of $\Psi$ does not guarantee bounded iterates when the relevant sublevel sets are unbounded.
 Therefore, boundedness must be assumed, as in the choice of $D$, or ensured through additional algorithmic safeguards.
\end{remark}

In practice, 
Lemma~\ref{lem:dp_global} states that the NO's search direction can never impair the robustness of the iteration. 
However, if $\dv$ is poor search direction, the safeguards keep the objective monotone, and first-order convergence is driven by the smoother alone.
Notably, the proof uses no property of the direction $\dv$, thus the lemma is agnostic of a particular choice of NO's architecture.

Under the gradient-dominance assumption, the smoother safeguarding can be stated quantitatively by the following lemma. 

\begin{lemma}
\label{lem:dp_smoother_rate}
Let Assumptions~\ref{ass:smooth}, \ref{ass:spd}, \ref{ass:smoother_mono}, \ref{ass:smoother_dec} and~\ref{ass:pl} hold, and let ${\uv^{(i)},\vv^{(i)}\in N}$.
Then SNAP algorithm, with iteration as specified in Algorithm~\ref{alg:mgopt} satisfies, irrespective of the NO's search direction,
\begin{equation}
G(\uv^{(i+1)})\ \le\ (1-2\gamma\mu)\,G(\uv^{(i)}),
\label{eq:dp_smoother_rate}
\end{equation}
where $(2\gamma\mu)\le1$ whenever $G(\uv^{(i)})>0$.
\end{lemma}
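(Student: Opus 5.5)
The plan is to chain the monotonicity from Lemma~\ref{lem:dp_global} with the sufficient decrease of Assumption~\ref{ass:smoother_dec}, and then use the PL inequality of Assumption~\ref{ass:pl} to convert the gradient norm into the optimality gap. First, I would reuse the monotonicity chain from the proof of Lemma~\ref{lem:dp_global}. It relies only on Assumption~\ref{ass:smoother_mono} and on the safeguard in Algorithm~\ref{alg:mgopt}, so it holds irrespective of $\dv^{(i)}$:
\begin{equation*}
\Psi(\uv^{(i+1)})\le\Psi(\wv^{(i)})\le\Psi(\vv^{(i)}).
\end{equation*}
Combining this with~\eqref{eq:dp_smoother_decrease} gives $\Psi(\uv^{(i+1)})\le\Psi(\uv^{(i)})-\gamma\|\nabla\Psi(\uv^{(i)})\|_2^2$. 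Subtracting $\Psi(\uv^{\star})$ from both sides turns this into a statement about the gap, $G(\uv^{(i+1)})\le G(\uv^{(i)})-\gamma\|\nabla\Psi(\uv^{(i)})\|_2^2$.

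Second, since $\uv^{(i)}\in N$, the PL inequality~\eqref{eq:dp_pl} gives $\|\nabla\Psi(\uv^{(i)})\|_2^2\ge 2\mu\,G(\uv^{(i)})$. Substituting this lower bound yields $G(\uv^{(i+1)})\le(1-2\gamma\mu)G(\uv^{(i)})$, which is~\eqref{eq:dp_smoother_rate}. One point needs care here. $G(\uv^{(i+1)})$ is defined via~\eqref{eq:dp_gap} only on $N$, but the expression $\Psi(\uv^{(i+1)})-\Psi(\uv^{\star})$ makes sense anyway. Moreover, Assumption~\ref{ass:spd} places the states, including $\uv^{(i+1)}$ as the next iterate, in $N$.

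Third, I would prove the side claim $2\gamma\mu\le1$ whenever $G(\uv^{(i)})>0$. Since $\uv^{(i+1)}\in N$, the lower bound in~\eqref{eq:dp_gap_bounds} gives $G(\uv^{(i+1)})\ge0$. Hence $0\le G(\uv^{(i+1)})\le(1-2\gamma\mu)G(\uv^{(i)})$, and dividing by $G(\uv^{(i)})>0$ gives $1-2\gamma\mu\ge0$.

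The main obstacle is this last step. It depends on $\uv^{(i+1)}$, equivalently the pre-smoothed $\vv^{(i)}$, lying in $N$, which is what guarantees $G\ge0$ there. If only $\uv^{(i)},\vv^{(i)}\in N$ were available, I would run the argument with $\vv^{(i)}$ in place of $\uv^{(i+1)}$. The pre-smoothing step alone gives $0\le G(\vv^{(i)})\le(1-2\gamma\mu)G(\uv^{(i)})$, which already forces $2\gamma\mu\le1$. The contraction for $\uv^{(i+1)}$ then follows from $\Psi(\uv^{(i+1)})\le\Psi(\vv^{(i)})$, without needing nonnegativity of $G(\uv^{(i+1)})$.
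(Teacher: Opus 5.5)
Your proof is correct and follows the paper's argument: the sufficient decrease of Assumption~\ref{ass:smoother_dec}, the PL inequality at $\uv^{(i)}$, and the fact that the remaining moves of the cycle do not increase $\Psi$. The paper derives $2\gamma\mu\le1$ from $G(\vv^{(i)})\ge0$, which is exactly your fallback route and uses only the stated hypothesis $\vv^{(i)}\in N$; your primary route via $\uv^{(i+1)}\in N$ is also valid under Assumption~\ref{ass:spd}.
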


\begin{proof}
Assumption~\ref{ass:smoother_dec} and~\eqref{eq:dp_pl} at $\uv^{(i)}\in N$ give
$\Psi(\vv^{(i)})\le\Psi(\uv^{(i)}) - \gamma\|\nabla\Psi(\uv^{(i)})\|_{2}^{2}\le\Psi(\uv^{(i)}) - 2\gamma\mu\,G(\uv^{(i)})$.
Then,  subtracting $\Psi(\uv^{\star})$ yields ${G(\vv^{(i)})\le(1-2\gamma\mu)\,G(\uv^{(i)})}$.
Since $\vv^{(i)}\in N$, the lower bound in~\eqref{eq:dp_gap_bounds} gives $G(\vv^{(i)})\ge0$, which forces $2\gamma\mu\le1$ whenever $G(\uv^{(i)})>0$.
The remaining moves of the cycle do not increase $\Psi$, as shown in the proof of Lemma~\ref{lem:dp_global}, whence $G(\uv^{(i+1)})\le G(\vv^{(i)})$.
\end{proof}

In practice, 
Lemma~\ref{lem:dp_smoother_rate} provides the baseline that any NO-based search direction must improve on. 
As we can see, the optimality gap contracts linearly, but at a condition-limited rate.
In particular, for gradient-based smoothers, $\gamma$ scales like an inverse Lipschitz constant, i.e., $1/\overline{\lambda}$, and $\mu=\underline{\lambda}$, so $2\gamma\mu$ is of the order $1/\kappa$ with $\kappa=\overline{\lambda}/\underline{\lambda}$, and the factor $(1-2\gamma\mu)$ approaches one for ill-conditioned problems.
\subsection{NO's search direction accuracy and accelerated convergence}

The natural measure of progress for the safeguarded step is the optimality gap~\eqref{eq:dp_gap}.
We therefore quantify the accuracy of the NO's direction by the \emph{defect} $\varepsilon(\vv)$, given as
\begin{equation}
\varepsilon(\vv)\ :=\ \Psi\big(\vv+M_{2}(\nabla\Psi,\vv)\big)-\Psi(\uv^{\star})\ =\ G\big(\vv+M_{2}(\nabla\Psi,\vv)\big),
\label{eq:dp_eps}
\end{equation}
together with its relative counterpart $\theta(\vv):=\varepsilon(\vv)/G(\vv)$, which is well defined whenever $G(\vv)>0$.
Note that $\varepsilon$ is deliberately evaluated at the \emph{unit} step, i.e., it measures the quality of the search direction as returned by the oracle, i.e., independently of the step-size control.
Here, we highlight that the defect $\varepsilon(\vv)$ vanishes if and only if the search direction is exact, i.e., $\uv^\star = \vv+\dv$.
Moreover, $\Psi(\vv+\dv)$ is precisely the quantity that the Armijo test~\eqref{eq:dp_accept} evaluates at its first trial step.

\begin{proposition}
\label{prop:dp}
Let Assumptions~\ref{ass:smooth}, \ref{ass:spd}, \ref{ass:smoother_mono}, \ref{ass:smoother_dec} and~\ref{ass:pl} hold, and consider the $i$-th cycle of Algorithm~\ref{alg:mgopt} with $G(\uv^{(i)})>0$, whose states $\uv^{(i)}$, $\vv^{(i)}$ and $\wv^{(i)}$ lie in $N$. Then:
\begin{enumerate}
\item[(i)] If the NO's search direction is rejected, then
${G(\uv^{(i+1)})\ \le\ (1-2\gamma\mu)\,G(\uv^{(i)})}$.
\item[(ii)] If the NO's search direction is accepted with a backtracked step $\eta_{2}^{(i)}<1$, then
${G(\uv^{(i+1)})\ \le\ (1-2\gamma\mu)\,G(\uv^{(i)})\ -\ c_{1}\,\eta_{2}^{(i)}\,\big|\langle\nabla\Psi(\vv^{(i)}),\dv^{(i)}\rangle\big|}$.
\item[(iii)] If the search direction is accepted with $\eta_{2}^{(i)}=1$, then $\theta(\vv^{(i)})<1$ and
\begin{equation}
G(\uv^{(i+1)})\ \le\ (1-2\gamma\mu)\ \theta(\vv^{(i)})\ G(\uv^{(i)}) .
\label{eq:dp_gap_rate}
\end{equation}
\end{enumerate}
Consequently, the safeguarded cycle is never slower than $M_1$ alone, and each unit-step NO's search direction multiplies the gap by the additional factor $\theta(\vv^{(i)})$.
Thus, the smaller $\theta(\vv^{(i)})$, the faster the convergence.
\end{proposition}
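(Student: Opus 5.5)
The proof reuses the one-cycle chain from Lemma~\ref{lem:dp_global} and the contraction of the pre-smoothing step from Lemma~\ref{lem:dp_smoother_rate}. The common ingredient for all three cases is
\begin{equation*}
G(\vv^{(i)})\le(1-2\gamma\mu)\,G(\uv^{(i)}),
\end{equation*}
which follows from Assumption~\ref{ass:smoother_dec} combined with the PL inequality~\eqref{eq:dp_pl} at $\uv^{(i)}\in N$. The other ingredient is monotonicity of the post-smoothing step (Assumption~\ref{ass:smoother_mono}), which gives $G(\uv^{(i+1)})\le G(\wv^{(i)})$. It then suffices to bound $G(\wv^{(i)})$ in terms of $G(\vv^{(i)})$ in each case and chain the inequalities.

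For (i), rejection means $\wv^{(i)}=\vv^{(i)}$, so the claim is exactly Lemma~\ref{lem:dp_smoother_rate}.

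For (ii), the Armijo inequality~\eqref{eq:dp_accept} holds at the accepted $\eta_2^{(i)}$. Subtracting $\Psi(\uv^\star)$ from both sides gives
\begin{equation*}
G(\wv^{(i)})\le G(\vv^{(i)})-c_1\eta_2^{(i)}\big|\langle\nabla\Psi(\vv^{(i)}),\dv^{(i)}\rangle\big|,
\end{equation*}
where the inner product is negative by the descent test, so it equals minus its absolute value. Inserting the bound on $G(\vv^{(i)})$ yields (ii). The restriction $\eta_2^{(i)}<1$ plays no role in this argument, so the same bound also holds at unit step.

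For (iii), $\eta_2^{(i)}=1$ means $\wv^{(i)}=\vv^{(i)}+\dv^{(i)}=\vv^{(i)}+M_2(\nabla\Psi,\vv^{(i)})$. Hence $G(\wv^{(i)})=\varepsilon(\vv^{(i)})$ by definition~\eqref{eq:dp_eps}. Before forming $\theta(\vv^{(i)})$ we need $G(\vv^{(i)})>0$. The Armijo condition gives $\varepsilon(\vv^{(i)})<G(\vv^{(i)})$, because the descent term is strictly negative. Since $\wv^{(i)}\in N$, the bound~\eqref{eq:dp_gap_bounds} gives $\varepsilon(\vv^{(i)})\ge0$. Together these force $G(\vv^{(i)})>0$, so $\theta(\vv^{(i)})$ is well defined and $\theta(\vv^{(i)})<1$. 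Then
\begin{equation*}
G(\uv^{(i+1)})\le\varepsilon(\vv^{(i)})=\theta(\vv^{(i)})\,G(\vv^{(i)})\le(1-2\gamma\mu)\,\theta(\vv^{(i)})\,G(\uv^{(i)}),
\end{equation*}
where the last step uses $\theta\ge0$ and the contraction of the pre-smoothing step.

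The only delicate point is this well-definedness of $\theta$ and the strictness $\theta<1$. Both rely on the iterates staying in $N$, where $G\ge0$, and on the strict negativity of $\langle\nabla\Psi(\vv^{(i)}),\dv^{(i)}\rangle$ guaranteed by the descent test. The concluding remark then follows by comparing the three bounds with Lemma~\ref{lem:dp_smoother_rate}: in every case $G(\uv^{(i+1)})\le(1-2\gamma\mu)G(\uv^{(i)})$, and in case (iii) the extra factor $\theta(\vv^{(i)})<1$ appears.
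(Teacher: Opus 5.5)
Your proof is correct and follows essentially the same route as the paper: you use the pre-smoothing contraction from Assumption~\ref{ass:smoother_dec} and the PL inequality, monotone post-smoothing, and a case split on how $\wv^{(i)}$ is formed. The only minor difference is in (iii), where you obtain $G(\vv^{(i)})>0$ from the strict Armijo decrease together with $\varepsilon(\vv^{(i)})=G(\wv^{(i)})\ge 0$, whereas the paper derives it from $\nabla\Psi(\vv^{(i)})\neq\boldsymbol{0}$ and the lower bound in~\eqref{eq:dp_gap_bounds}; both arguments are valid.
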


\begin{proof}
From the proof of Lemma~\ref{lem:dp_smoother_rate}, the pre-smoothing step satisfies 
\begin{equation*}
    {G(\vv^{(i)})\le(1-2\gamma\mu)\,G(\uv^{(i)})},
\end{equation*}
and post-smoothing does not increase $\Psi$ by Assumption~\ref{ass:smoother_mono}.
If the NO's search direction is rejected, $\wv^{(i)}=\vv^{(i)}$, which gives claim~(i).

If NO's search direction $\dv^{(i)}$ it is accepted with step size $\eta_{2}^{(i)}$, the Armijo condition~\eqref{eq:dp_accept} and $\langle\nabla\Psi(\vv^{(i)}),\dv^{(i)}\rangle<0$ give 
\begin{equation*}
    \Psi(\wv^{(i)})\le\Psi(\vv^{(i)})-c_{1}\eta_{2}^{(i)}\,|\langle\nabla\Psi(\vv^{(i)}),\dv^{(i)}\rangle|,
\end{equation*}
i.e., $G(\wv^{(i)})\le G(\vv^{(i)})-c_{1}\eta_{2}^{(i)}\,|\langle\nabla\Psi(\vv^{(i)}),\dv^{(i)}\rangle|$, which gives claim~(ii).

If NO's search direction $\dv^{(i)}$ it is accepted with the unit step $\eta_{2}^{(i)}=1$, then the descent test gives $\nabla\Psi(\vv^{(i)})\neq\boldsymbol{0}$, so $\vv^{(i)}\neq\uv^{\star}$ and $G(\vv^{(i)})>0$ by the lower bound in~\eqref{eq:dp_gap_bounds}.
Hence $\theta(\vv^{(i)})$ is well defined.
Moreover, $\wv^{(i)}=\vv^{(i)}+\dv^{(i)}$, so ${G(\wv^{(i)})=\varepsilon(\vv^{(i)})=\theta(\vv^{(i)})\,G(\vv^{(i)})}$ by the definition~\eqref{eq:dp_eps}, and the strict Armijo decrease gives $\theta(\vv^{(i)})<1$, while $\wv^{(i)}\in N$ and~\eqref{eq:dp_gap_bounds} give $\theta(\vv^{(i)})\ge0$.
Multiplying the pre-smoothing bound by $\theta(\vv^{(i)})\ge0$ yields~\eqref{eq:dp_gap_rate} and claim~(iii).
\end{proof}

In practical terms, Proposition~\ref{prop:dp} separates the two convergence mechanisms multiplicatively.
The smoother contributes its factor $(1-2\gamma\mu)$ in every cycle, while the NO's search direction contributes $\theta(\vv)$ exactly when it is accurate.

Cases~(i)--(iii) of Proposition~\ref{prop:dp} are \emph{a posteriori} statements. 
They quantify the progress once the outcome of the safeguard is known, but they do not predict which case occurs.
The following corollary supplies the missing \emph{a priori} information specifying that sufficiently accurate NO's search direction forces the fast case~(iii), so an accurate NO's search direction is never wasted.

\begin{corollary}
\label{cor:dp_accept}
Let, in addition to the assumptions of Proposition~\ref{prop:dp}, Assumption~\ref{ass:lip} hold.
Now, let $\vv^{(i)}+\dv^{(i)}\in N$, and 
\begin{equation}
\theta(\vv^{(i)})\ \le\ \bar\theta\ <\ \frac{1}{4\kappa^{2}}
\qquad\text{and}\qquad
c_{1}\ \le\ \frac{1-\bar\theta}{2\kappa\,(1+\sqrt{\bar\theta}\,)},
\qquad \kappa:=\overline{\lambda}/\underline{\lambda}.
\label{eq:dp_accept_cond}
\end{equation}
Then case~(iii) of Proposition~\ref{prop:dp} occurs, i.e., NO's search direction passes the descent test and satisfies the Armijo rule~\eqref{eq:dp_accept} already at the first trial step $\eta_{2}=1$.
\end{corollary}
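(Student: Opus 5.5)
The plan is to write everything in terms of the error $\ev=\uv^\star-\vv$ at $\vv:=\vv^{(i)}$ and the residual error of the unit step, $\rv := (\vv+\dv)-\uv^\star$ with $\dv:=\dv^{(i)}$, so that $\dv=\ev+\rv$. Both the descent test and the unit-step Armijo test then reduce to comparing $\|\rv\|_2$ with $\|\ev\|_2$, and $\theta(\vv)$ controls this ratio.

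\emph{Step 1: smallness of $\rv$.} Since $\vv,\ \vv+\dv\in N$, the two-sided bound \eqref{eq:dp_gap_bounds} applies at both points. Together with $G(\vv+\dv)=\varepsilon(\vv)=\theta(\vv)G(\vv)$ it gives
\[
\tfrac12\underline{\lambda}\|\rv\|_2^2\le \theta(\vv)\,G(\vv)\le \tfrac12\theta(\vv)\,\overline{\lambda}\|\ev\|_2^2,
\]
hence $\|\rv\|_2\le\sqrt{\bar\theta\kappa}\,\|\ev\|_2$. I also note that $G(\vv)>0$: otherwise $\vv=\uv^\star$ by \eqref{eq:dp_gap_bounds}, and then $\ev=\boldsymbol{0}$ and $G(\uv^{(i)})\le G(\vv)$ contradicts nothing but makes the statement vacuous. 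So $\ev\neq\boldsymbol 0$.

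\emph{Step 2: descent test.} By the mean-value (integral) form and convexity of $N$, $\nabla\Psi(\vv)=-\Av\ev$ with $\Av:=\int_0^1\nabla^2\Psi(\uv^\star-t\ev)\,dt$. By \eqref{eq:dp_spectral}, $\Av$ is symmetric with spectrum in $[\underline{\lambda},\overline{\lambda}]$. Then
\[
\langle\nabla\Psi(\vv),\dv\rangle=-\langle\Av\ev,\ev\rangle-\langle\Av\ev,\rv\rangle\le-\underline{\lambda}\|\ev\|_2^2+\overline{\lambda}\|\ev\|_2\|\rv\|_2 .
\]
This must be strictly negative, which requires a bound of the form $\kappa\cdot(\text{ratio }\|\rv\|/\|\ev\|)<1$. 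This is the step I expect to be the main obstacle. The naive Euclidean bound from Step 1 gives the ratio $\kappa^{3/2}\sqrt{\bar\theta}$, whereas the hypothesis $\bar\theta<1/(4\kappa^2)$ is tailored to a ratio of order $\kappa\sqrt{\bar\theta}\le 1/2$.

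To get the sharper constant, I would first try to measure $\rv$ and $\ev$ in the $\Av$-energy norm. Using Cauchy--Schwarz in the $\Av$-inner product, $|\langle\Av\ev,\rv\rangle|\le\|\ev\|_{\Av}\|\rv\|_{\Av}$, and comparing $\|\cdot\|_{\Av}$ with the gap (again via \eqref{eq:dp_gap_bounds}) should lose only one factor of $\kappa$. Concretely, I aim for
\[
\langle\nabla\Psi(\vv),\dv\rangle\le-\underline{\lambda}\|\ev\|_2^2\bigl(1-2\kappa\sqrt{\bar\theta}\bigr)<0 .
\]
If this cannot be achieved, I would accept a slightly different power of $\kappa$ and check that the stated threshold still dominates it. In either case the descent test passes.

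\emph{Step 3: Armijo at $\eta_2=1$.} The left-hand side is exact:
\[
\Psi(\vv+\dv)-\Psi(\vv)=G(\vv+\dv)-G(\vv)=-(1-\theta(\vv))\,G(\vv)\le-\tfrac12(1-\bar\theta)\,\underline{\lambda}\|\ev\|_2^2 .
\]
For the right-hand side, I bound $|\langle\nabla\Psi(\vv),\dv\rangle|\le\overline{\lambda}\|\ev\|_2(\|\ev\|_2+\|\rv\|_2)\le\overline{\lambda}(1+\sqrt{\bar\theta}\cdot\text{const})\|\ev\|_2^2$, again using the energy-norm version of Step 1. Then $c_1\langle\nabla\Psi(\vv),\dv\rangle\ge-c_1\overline{\lambda}(1+\sqrt{\bar\theta})\|\ev\|_2^2$. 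Comparing the two sides, the Armijo inequality \eqref{eq:dp_accept} with $\eta_2=1$ holds as soon as $c_1\le(1-\bar\theta)/(2\kappa(1+\sqrt{\bar\theta}))$, which is exactly the second condition in \eqref{eq:dp_accept_cond}.

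Since $1\ge\eta_{\min}$, the while-loop of Algorithm~\ref{alg:mgopt} exits immediately and the direction is accepted with unit step. This is case~(iii) of Proposition~\ref{prop:dp}.
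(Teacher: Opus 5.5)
\textbf{Verdict: genuine gap in Step~3.} Your proof yields the corollary only under a strictly smaller bound on $c_1$ than the one stated.

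\textbf{Step~2 is fine.} Your energy-norm plan for the descent test works. Take $\boldsymbol{A}$ to be the averaged Hessian and $\boldsymbol{r}=\dv-\ev$. Applying \eqref{eq:dp_gap_bounds} at $\vv$ and at $\vv+\dv$ gives $\|\boldsymbol{r}\|_{\boldsymbol{A}}^2\le\overline{\lambda}\|\boldsymbol{r}\|_2^2\le 2\kappa\,\theta(\vv)\,G(\vv)$ and $\|\ev\|_{\boldsymbol{A}}^2\ge\underline{\lambda}\|\ev\|_2^2\ge 2G(\vv)/\kappa$. Hence $\|\boldsymbol{r}\|_{\boldsymbol{A}}\le\kappa\sqrt{\bar\theta}\,\|\ev\|_{\boldsymbol{A}}<\tfrac12\|\ev\|_{\boldsymbol{A}}$, and the slope is at most $-(1-\kappa\sqrt{\bar\theta})\|\ev\|_{\boldsymbol{A}}^2<0$. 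The paper reaches the same conclusion more directly, from convexity, $\langle\nabla\Psi(\vv),\ev\rangle\le -G(\vv)$, plus Cauchy--Schwarz.

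\textbf{Where Step~3 fails.} You write $|\langle\nabla\Psi(\vv),\dv\rangle|\le\overline{\lambda}\bigl(1+\sqrt{\bar\theta}\cdot\mathrm{const}\bigr)\|\ev\|_2^2$ and then drop the constant. That constant is not $1$.
\begin{itemize}
\item Step~1 gives $\|\boldsymbol{r}\|_2\le\sqrt{\kappa\bar\theta}\,\|\ev\|_2$, so $\mathrm{const}=\sqrt{\kappa}$. The energy-norm version only makes it larger.
\item The loss is unavoidable as long as you argue through the ratio $\|\boldsymbol{r}\|_2/\|\ev\|_2$. For a quadratic with $\ev$ along the top eigenvector and $\boldsymbol{r}$ along the bottom one, $\|\boldsymbol{r}\|_2=\sqrt{\kappa\,\theta(\vv)}\,\|\ev\|_2$ exactly.
\item You also weaken the Armijo left-hand side to $-\tfrac12(1-\bar\theta)\underline{\lambda}\|\ev\|_2^2$.
\end{itemize}
Together, your chain only proves acceptance of the unit step when $c_1\le(1-\bar\theta)/\bigl(2\kappa(1+\sqrt{\kappa\bar\theta})\bigr)$. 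This is strictly more restrictive than \eqref{eq:dp_accept_cond}, so for $c_1$ between the two thresholds your argument proves nothing.

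\textbf{The fix.} Bound each vector against the gap $G(\vv)$ separately, not against each other.
\begin{itemize}
\item From \eqref{eq:dp_gap_bounds}, $\|\ev\|_2\le\sqrt{2G(\vv)/\underline{\lambda}}$ and $\|\dv-\ev\|_2\le\sqrt{2\theta(\vv)G(\vv)/\underline{\lambda}}$.
\item With $\|\nabla\Psi(\vv)\|_2\le\overline{\lambda}\|\ev\|_2$ this gives $|\langle\nabla\Psi(\vv),\dv\rangle|\le 2\kappa\bigl(1+\sqrt{\theta(\vv)}\bigr)G(\vv)$.
\item Keep the left-hand side exactly as $-(1-\theta(\vv))G(\vv)$. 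Since $(1-\theta)/(1+\sqrt{\theta})$ is decreasing in $\theta$, you get precisely the stated condition on $c_1$.
\end{itemize}
This is the paper's argument. In your energy-norm notation, the equivalent route is to use $\|\ev\|_{\boldsymbol{A}}^2\le 2\kappa G(\vv)$ and $\|\boldsymbol{r}\|_{\boldsymbol{A}}^2\le 2\kappa\,\theta(\vv)G(\vv)$.

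\textbf{Minor point.} If $\vv=\uv^\star$ the statement is not vacuous: the descent test would fail. What rules this case out is that the hypothesis $\theta(\vv)\le\bar\theta$ presupposes $\theta(\vv)$ is defined, i.e., $G(\vv)>0$.
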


\begin{proof}
We abbreviate $\vv=\vv^{(i)}$ and $\dv=\dv^{(i)}$.
By Assumption~\ref{ass:spd}, $\Psi$ is strongly convex on $N$, so $\Psi(\uv^{\star})\ge\Psi(\vv)+\langle\nabla\Psi(\vv),\uv^{\star}-\vv\rangle$ gives $\langle\nabla\Psi(\vv),\ev\rangle\le-G(\vv)$, while~\eqref{eq:dp_gap_bounds} yields ${\|\ev\|_{2}\le\sqrt{2G(\vv)/\underline{\lambda}}}$ and $\|\dv-\ev\|_{2}\le\sqrt{2\varepsilon(\vv)/\underline{\lambda}}$, and the integral form of the Taylor expansion gives ${\|\nabla\Psi(\vv)\|_{2}\le\overline{\lambda}\,\|\ev\|_{2}}$.
The Cauchy--Schwarz inequality then bounds the slope from both sides,
\begin{equation*}
-2\kappa\big(1+\sqrt{\bar\theta}\,\big)\,G(\vv)\ \le\ \langle\nabla\Psi(\vv),\dv\rangle\ \le\ -\big(1-2\kappa\sqrt{\bar\theta}\,\big)\,G(\vv) .
\end{equation*}
The upper bound is negative for $\bar\theta<1/(4\kappa^{2})$, so the descent test passes.
The Armijo rule at $\eta_{2}=1$ reads $\Psi(\vv+\dv)-\Psi(\vv)=-(1-\theta(\vv))\,G(\vv)\le c_{1}\langle\nabla\Psi(\vv),\dv\rangle$, which holds by the lower bound whenever $c_{1}\le(1-\bar\theta)/\big(2\kappa(1+\sqrt{\bar\theta}\,)\big)$.
\end{proof}

Corollary~\ref{cor:dp_accept} shows that a sufficiently accurate NO's search direction is guaranteed to pass both the descent test and the Armijo condition with the unit step $\eta_2=1$.
Here, the required accuracy is relative to the current optimality gap $G(\vv)$.
Thus, if the gap is large, a larger absolute correction error can be tolerated, whereas increasingly accurate corrections are required as the iterate approaches the solution.
The sufficient accuracy requirement also becomes more stringent as the local condition number $\kappa$ increases, i.e., as the problem becomes more ill-conditioned.
The constants in~\eqref{eq:dp_accept_cond} are conservative, as the dependence on $\kappa$ arises from the Euclidean Cauchy--Schwarz bounds used in the analysis, e.g., $\kappa=10^{3}$ requires $\bar\theta<2.5\cdot10^{-7}$.
Since $\sqrt{\bar{\theta}}<1/(2\kappa)$, the condition on the Armijo parameter is implied by $c_1\le1/(4\kappa)$, which is restrictive for ill-conditioned problems.
If the conditions in~\eqref{eq:dp_accept_cond} are not met, cases~(i)--(ii) of Proposition~\ref{prop:dp} still hold.

\subsection{Impact of training on solver performance}
\label{sec:impact_training}
We now relate the training loss of the NO to the convergence behavior of SNAP.

\begin{proposition}
\label{cor:dp_training}
Let Assumptions~\ref{ass:smooth} and~\ref{ass:smoother_mono} hold, and consider the $i$-th cycle of Algorithm~\ref{alg:mgopt}.
Then
\begin{equation}
G(\uv^{(i+1)})\ \le\ G(\wv^{(i)})\ \le\ G(\vv^{(i)}),
\label{eq:dp_training_gap}
\end{equation}
where $G(\wv^{(i)})=\varepsilon(\vv^{(i)})$ whenever the NO's search direction is accepted with the unit step $\eta_{2}^{(i)}=1$.
Under the assumptions of Lemma~\ref{lem:dp_smoother_rate}, $G(\vv^{(i)})$ in~\eqref{eq:dp_training_gap} can be replaced by $(1-2\gamma\mu)\,G(\uv^{(i)})$.
Moreover, let the assumptions of Corollary~\ref{cor:dp_accept} hold, except for the condition $\theta(\vv^{(i)})\le\bar\theta$, i.e., let $\bar\theta<1/(4\kappa^{2})$ and $c_{1}$ satisfy~\eqref{eq:dp_accept_cond}, while $\theta(\vv^{(i)})$ is arbitrary.
Then $\varepsilon(\vv^{(i)})\ge0$ and
\begin{equation}
G(\wv^{(i)})\ \le\
\begin{cases}
\varepsilon(\vv^{(i)}), & \varepsilon(\vv^{(i)})\le\bar\theta\,G(\vv^{(i)}),\\[2pt]
G(\vv^{(i)})\ <\ \varepsilon(\vv^{(i)})/\bar\theta, & \varepsilon(\vv^{(i)})>\bar\theta\,G(\vv^{(i)}),
\end{cases}
\qquad
G(\uv^{(i+1)})\ \le\ \frac{\varepsilon(\vv^{(i)})}{\bar\theta}.
\label{eq:dp_training_bound}
\end{equation}
Consequently, if these assumptions hold for all states $\vv$ of a distribution $\pazocal{S}$ of pre-smoothed states, then
\begin{equation}
{\mathbb{E}_{\vv\sim\pazocal{S}}[G(\uv^{(i+1)})]\ \le\ }\mathbb{E}_{\vv\sim\pazocal{S}}[G(\wv(\vv))]\ \le\ \min\Big\{\mathbb{E}_{\vv\sim\pazocal{S}}[G(\vv)],\ \tfrac{1}{\bar\theta}\,\mathbb{E}_{\vv\sim\pazocal{S}}[\varepsilon(\vv)]\Big\},
\label{eq:dp_expected_bound}
\end{equation}
where the expected defect is
\begin{equation}
\mathbb{E}_{\vv\sim\pazocal{S}}\big[\varepsilon(\vv)\big]
=
\mathbb{E}_{\vv\sim\pazocal{S}}
\Big[\Psi\big(\vv+\dv(\vv)\big)\Big]
-
\Psi(\uv^{\star}).
\label{eq:dp_optimal_loss}
\end{equation}
Moreover, if Assumptions~\ref{ass:lip} and~\ref{ass:spd} hold and ${\vv+\dv\in N}$, the 
bound~\eqref{eq:dp_gap_bounds} gives
\begin{equation}
\tfrac12\,\underline{\lambda}\,\|\dv-\ev\|_{2}^{2}
\le
\varepsilon(\vv)
\le
\tfrac12\,\overline{\lambda}\,\|\dv-\ev\|_{2}^{2},
\label{eq:dp_eps_bounds}
\end{equation}
and the expansion~\eqref{eq:dp_merit_expansion} yields
\begin{equation}
\varepsilon(\vv)
=
\tfrac12\|\dv-\ev\|_{\Hv^{\star}}^{2}
+
\pazocal{O}\big(\|\dv-\ev\|_{2}^{3}\big).
\label{eq:dp_eps_expansion}
\end{equation}
Thus, the $\Hv^{\star}$-weighted solution match is the second-order approximation of the objective loss, while the Euclidean solution match controls the defect up to the spectral weighting in~\eqref{eq:dp_eps_bounds}.
\end{proposition}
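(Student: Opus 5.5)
The proof runs through the statement's claims in order, reusing the monotonicity mechanism from the proof of Lemma~\ref{lem:dp_global} and the acceptance criterion of Corollary~\ref{cor:dp_accept}.

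\textbf{Step 1: the chain \eqref{eq:dp_training_gap}.} By Assumption~\ref{ass:smoother_mono}, post-smoothing does not increase $\Psi$, so $G(\uv^{(i+1)})\le G(\wv^{(i)})$. The NO step is either rejected, giving $\wv^{(i)}=\vv^{(i)}$, or accepted under the Armijo rule, which strictly decreases $\Psi$. Hence $G(\wv^{(i)})\le G(\vv^{(i)})$. If the step is accepted with $\eta_2^{(i)}=1$, then $\wv^{(i)}=\vv^{(i)}+\dv^{(i)}$, and $G(\wv^{(i)})=\varepsilon(\vv^{(i)})$ holds by the definition~\eqref{eq:dp_eps}. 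Replacing $G(\vv^{(i)})$ by $(1-2\gamma\mu)G(\uv^{(i)})$ is exactly the pre-smoothing bound from the proof of Lemma~\ref{lem:dp_smoother_rate}.

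\textbf{Step 2: the case split \eqref{eq:dp_training_bound}.} Since $\vv+\dv\in N$, the lower bound in \eqref{eq:dp_gap_bounds} gives $\varepsilon(\vv^{(i)})\ge0$.

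In the first case, $\varepsilon\le\bar\theta G(\vv)$, so $\theta(\vv)\le\bar\theta$ whenever $G(\vv)>0$. All hypotheses of Corollary~\ref{cor:dp_accept} then hold, the unit step is accepted, and Step~1 gives $G(\wv)=\varepsilon$. The degenerate case $G(\vv)=0$ needs a remark. There $\vv=\uv^\star$ and the gradient vanishes, so the descent test fails, $\wv=\vv$, and $G(\wv)=0\le\varepsilon$.

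In the second case, $\varepsilon>\bar\theta G(\vv)$. Step~1 gives $G(\wv)\le G(\vv)$, and $G(\vv)<\varepsilon/\bar\theta$ is just a rearrangement of the case condition.

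In both cases $G(\wv)\le\varepsilon/\bar\theta$; in the first case this uses $\bar\theta<1$, so $\varepsilon\le\varepsilon/\bar\theta$. Combining with $G(\uv^{(i+1)})\le G(\wv)$ gives the final bound.

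\textbf{Step 3: the expectation bound \eqref{eq:dp_expected_bound}.} The pointwise inequalities $G(\uv^{(i+1)})\le G(\wv)\le\min\{G(\vv),\varepsilon/\bar\theta\}$ hold for every $\vv$ in the support of $\pazocal{S}$. Taking expectations and using $\mathbb{E}[\min\{a,b\}]\le\min\{\mathbb{E}a,\mathbb{E}b\}$, together with monotonicity and linearity of expectation, gives the result. Here I assume the quantities are integrable. Formula~\eqref{eq:dp_optimal_loss} is linearity of expectation applied to~\eqref{eq:dp_eps}, since $\Psi(\uv^\star)$ is constant.

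\textbf{Step 4: the defect bounds.} Note that $\vv+\dv-\uv^\star=\dv-\ev$. Applying \eqref{eq:dp_gap_bounds} at the point $\vv+\dv\in N$ gives \eqref{eq:dp_eps_bounds}. Applying the expansion \eqref{eq:dp_merit_expansion} at the same point gives \eqref{eq:dp_eps_expansion}, with remainder $|R|\le\tfrac L6\|\dv-\ev\|_2^3$.

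The main obstacle is the first case of Step~2. It requires checking that Corollary~\ref{cor:dp_accept} really applies, namely that $\theta\le\bar\theta$ together with the $c_1$ condition gives acceptance at unit step, and handling the edge case $G(\vv)=0$ where $\theta$ is undefined. Everything else is bookkeeping.
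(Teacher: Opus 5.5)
Your proposal is correct and follows the paper's proof essentially step for step. The shared steps are: the chain from the Armijo/rejection dichotomy plus post-smoothing monotonicity; the case split via Corollary~\ref{cor:dp_accept} together with $\bar\theta<1$; taking expectations of the pointwise bounds; and evaluating \eqref{eq:dp_gap_bounds} and \eqref{eq:dp_merit_expansion} at $\vv+\dv$, using $\vv+\dv-\uv^\star=\dv-\ev$. Your separate handling of the degenerate case $G(\vv^{(i)})=0$, where $\theta$ is undefined and the paper says nothing, is a small but genuine gain in rigor; it can also be settled directly by $G(\wv^{(i)})\le G(\vv^{(i)})=0\le\varepsilon(\vv^{(i)})$.
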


\begin{proof}
If the unit step is accepted, then $\wv^{(i)}=\vv^{(i)}+\dv^{(i)}$, so $G(\wv^{(i)})=\varepsilon(\vv^{(i)})$ by the definition~\eqref{eq:dp_eps}.
Otherwise, either a backtracked step satisfying the Armijo condition~\eqref{eq:dp_accept} is taken or the search direction is rejected, and in both cases ${G(\wv^{(i)})\le G(\vv^{(i)})}$.
Since post-smoothing does not increase $\Psi$ by Assumption~\ref{ass:smoother_mono}, ${G(\uv^{(i+1)})\le G(\wv^{(i)})}$, which gives~\eqref{eq:dp_training_gap}.
The variant with ${(1-2\gamma\mu)\,G(\uv^{(i)})}$ follows from the pre-smoothing bound in the proof of Lemma~\ref{lem:dp_smoother_rate}
The lower bound $\varepsilon(\vv^{(i)})\ge0$ follows from~\eqref{eq:dp_gap_bounds}, as $\vv^{(i)}+\dv^{(i)}\in N$.
If $\varepsilon(\vv^{(i)})\le\bar\theta\,G(\vv^{(i)})$, Corollary~\ref{cor:dp_accept} guarantees that the unit step is accepted, so $G(\wv^{(i)})=\varepsilon(\vv^{(i)})$.
Otherwise, ${G(\wv^{(i)})\le G(\vv^{(i)})<\varepsilon(\vv^{(i)})/\bar\theta}$ by~\eqref{eq:dp_training_gap}.
Since $\bar\theta<1$, $G(\uv^{(i+1)})\le G(\wv^{(i)})\le\varepsilon(\vv^{(i)})/\bar\theta$ in both cases, which gives~\eqref{eq:dp_training_bound}.
Taking expectations of $G(\uv^{(i+1)})\le G(\wv^{(i)})\le G(\vv^{(i)})$ and of $G(\wv^{(i)})\le\varepsilon(\vv^{(i)})/\bar\theta$ gives~\eqref{eq:dp_expected_bound}.
Taking the expectation of~\eqref{eq:dp_eps} over $\vv\sim\pazocal{S}$ gives~\eqref{eq:dp_optimal_loss}.
The remaining claims follow from~\eqref{eq:dp_gap_bounds} and~\eqref{eq:dp_merit_expansion} evaluated at $\wv=\vv+\dv$, for which $\wv-\uv^{\star}=\dv-\ev$.
\end{proof}

\begin{remark}
\label{rem:dp_training_surrogate}
The loss~\eqref{eq:dp_optimal_loss} serves as a surrogate for the progress of the safeguarded SNAP cycle.
By~\eqref{eq:dp_expected_bound}, the expected correction defect controls the expected post-cycle optimality gap, but only through the factor $1/\bar{\theta} > 4 \kappa^{2}$, which is pessimistic. 
The bound is tight on states where the unit step is accepted, since there $G(\wv)=\varepsilon(\vv)$, whereas on the remaining states the safeguard guarantees only the smoother baseline $G(\vv)$, and $\varepsilon(\vv)$ may be far larger than the actual gap.
Moreover, \eqref{eq:dp_optimal_loss} relates to the solver only if $\pazocal{S}$ approximates the distribution of pre-smoothed states visited by Algorithm~\ref{alg:mgopt}, which itself depends on the trained NO.
\end{remark}

Proposition~\ref{cor:dp_training} provides several practical guidelines for training NOs, such that they yield good search directions: 
\begin{itemize}
    \item \textbf{Training objective:}
	Proposition~\ref{cor:dp_training} suggests to train the NO using the objective function-based loss~\eqref{eq:dp_optimal_loss}, which directly targets the defect governing the reduction achieved by an accepted unit-step search direction.
    Alternatively, one may employ the commonly used Euclidean solution-match loss 
    \begin{equation}
    \|\vv+\dv-\uv^{\star}\|_{2}^{2}
    =
    \|\dv-\ev\|_{2}^{2}.
    \label{eq:dp_solution_match}
    \end{equation}
    The bounds~\eqref{eq:dp_eps_bounds} establish the equivalence of this loss and the defect up to the spectral constants $\underline{\lambda}$ and $\overline{\lambda}$.
    However, the expression~\eqref{eq:dp_eps_expansion} shows that the local quadratic model of the defect is given by the $\Hv^{\star}$-weighted solution error.
    Hence, Euclidean solution matching constitutes a consistent loss, but its effectiveness may deteriorate for ill-conditioned problems. 

    \item \textbf{Search direction magnitude:}
    Training should recover not only a useful search direction but also an appropriate search direction magnitude.
    The backtracking line search in Algorithm~\ref{alg:mgopt} satisfies $\eta_{2}\le1$ and can therefore reduce an overestimated search direction, but it cannot enlarge an underestimated one.
    Accurate scaling of the NO inputs and outputs is of crucial importance for obtaining search directions that can be accepted close to the unit step.
    This motivates the normalization strategy introduced in Section~\ref{subsec:training_ons}.

    \item \textbf{Training-state distribution:}
    The relevant distribution $\pazocal{S}$ is the distribution of pre-smoothed states encountered by Algorithm~\ref{alg:mgopt}.
    Predictions on states outside the support of the training distribution are not directly constrained by the loss and may therefore provide less useful search direction directions.
    The convergence safeguard preserves the baseline convergence mechanism in this case, but the potential acceleration may be lost.
    Training data should therefore approximate the visitation distribution of SNAP. 

    \item \textbf{Sampling strategy:}
    The distribution $\pazocal{S}$ need not be sampled uniformly around $\uv^{\star}$.
    Instead, it should place probability mass on states representative of those encountered during the SNAP iteration.
    For example, one can generate such samples by perturbing reference solutions and then applying a small number of $M_1$ smoothing steps, or by collecting pre-smoothed states from representative solver trajectories.
    The latter might more directly approximate the visitation distribution, whereas the former should provide a simple offline sampling procedure that does not require full SNAP trajectories.
    Sampling should also cover different distances from $\uv^{\star}$, since the magnitude and structure of the desired search direction $\uv^{\star}-\vv$ change throughout the solve.

\item \textbf{Accuracy floor and late-stage convergence:}
In practice, the trained NO may only approximate the ideal correction $\ev$ up to a finite accuracy, such that the correction error $\|\dv-\ev\|_2$ does not decrease substantially below 
$\delta_{\mathrm{floor}}>0$.
By~\eqref{eq:dp_eps_bounds}, this induces a corresponding defect floor $\varepsilon_{\mathrm{floor}}>0$ satisfying
\begin{equation*}
\tfrac{1}{2}\underline{\lambda} \delta_{\mathrm{floor}}^2
\lesssim
\varepsilon_{\mathrm{floor}}
\lesssim
\tfrac{1}{2}\overline{\lambda} \delta_{\mathrm{floor}}^2.
\end{equation*}
As the iteration approaches the solution, the optimality gap $G(\vv)$ decreases, whereas the defect cannot decrease substantially below $\varepsilon_{\mathrm{floor}}$.
Once this accuracy floor is reached, the relative defect therefore behaves as
\begin{equation*}
\theta(\vv)
=
\frac{\varepsilon(\vv)}{G(\vv)}
\approx
\frac{\varepsilon_{\mathrm{floor}}}{G(\vv)}.
\end{equation*}
Corollary~\ref{cor:dp_accept} guarantees the acceptance of the unit step as long as $\varepsilon_{\mathrm{floor}}\le\bar\theta\,G(\vv)$, i.e., $G(\vv)\ge\varepsilon_{\mathrm{floor}}/\bar\theta>4\kappa^2\varepsilon_{\mathrm{floor}}$.
Hence, the learned correction is most effective while $G(\vv)\gg\varepsilon_{\mathrm{floor}}$, but becomes less effective as the optimality gap approaches the defect floor.
SNAP may therefore exhibit an initial accelerated regime followed by a late-stage regime in which the iteration increasingly relies on the underlying optimization method, given by $M_1$.
Reducing the NO prediction-error floor extends the accelerated regime. 

\end{itemize}
\section{Neural operators (NOs) as search direction maps}
\label{sec:onet_precond}

In this work, we construct the search direction map $M_2$ using a pre-trained NO.
Let $\mathrm{NO}_{\thetav}$ denote the NO parameterized by weights $\thetav\in\R^{n_w}$, and let ${\mathrm{NO}^{\ast}:=\mathrm{NO}_{\thetav^\ast}}$ be the trained operator.
Given the current iterate $\uv$, its gradient $\gv:=\nabla\Psi(\uv)$, the nodal coordinates $\boldsymbol{\xi}$, and the problem parameters $\boldsymbol{\lambda}$, the trained NO predicts the search direction
${\dv := \mathrm{NO}^{\ast}(\uv,\gv;\boldsymbol{\xi},\boldsymbol{\lambda}) \in\R^n}$.
The spatial quantities $(\uv, \gv, \boldsymbol{\xi})$ are supplied as nodal inputs, while $\boldsymbol{\lambda}$ is incorporated through a learned parameter embedding.

In this work, we consider two NO architectures, namely FNO~\cite{li2020fourier} and Transolver~\cite{zhou2026transolver}, described below.
For each NO, we provide a description of its architecture. 
Our description follows the standard notation from the literature.
Unless stated otherwise, the symbols introduced in this section are local to it and are unrelated to identically named quantities used elsewhere in the manuscript.

\subsection{Fourier neural operator (FNO)}
\label{sec:fno}
The FNO~\cite{li2020fourier} is a NO architecture that learns mappings between spatial fields using global convolution operators parameterized in Fourier space.
In our setting, the FNO maps the current iterate and its gradient to a search direction.
The nodal inputs are stacked channel-wise into a feature field $a\in\R^{n\times c}$, where $n$ denotes the number of nodal dofs and $c$ the number of input features per node.

The FNO first lifts $a$ pointwise to a width-$w$ latent representation and incorporates the problem parameters through a learned embedding, thus as
\begin{align}
\hv^{0}
=
a\Wv_{0}^{T}
+
\boldsymbol{\lambda}\Ev^{T},
\label{eq:fno_lift}
\end{align}
where $\Wv_{0}\in\R^{w\times c}$ and $\Ev\in\R^{w\times n_{\lambda}}$ are learnable weight matrices.
Here, we point out that the parameter embedding $\boldsymbol{\lambda}\Ev^{T}\in\R^w$ is broadcast over the spatial nodes.

The latent field is then propagated through $n_L$ Fourier layers as
\begin{align}
\hv^{\ell+1}
=
\sigma\left(
\hv^{\ell}\Wv_\ell^{T}
+
\pazocal{K}_\ell(\hv^{\ell})
\right),
\qquad
\ell=0,\ldots,n_L-1,
\label{eq:fno_layer}
\end{align}
where $\sigma$ denotes the activation function, commonly chosen as $\mathrm{GELU}$. 
The weights $\Wv_\ell\in\R^{w\times w}$ act pointwise and the spectral convolution $\pazocal{K}_\ell$ is defined as
\begin{align*}
\pazocal{K}_\ell(\hv)
=
\pazocal{F}^{-1}
\left(
\pazocal{F}(\hv)\Rv_\ell^{T}
\right),
\qquad
\left(
\pazocal{F}(\hv)\Rv_\ell^{T}
\right)(\omega)
=
\begin{cases}
(\pazocal{F}(\hv))(\omega)\Rv_\ell(\omega)^{T},
& |\omega|\le\omega_{\max},
\\
0,
& |\omega|>\omega_{\max}.
\end{cases}
\end{align*}
Here, $\pazocal{F}$ and $\pazocal{F}^{-1}$ denote the discrete Fourier transform and its inverse, and ${\Rv_\ell(\omega)\in\mathbb{C}^{w\times w}}$ are learnable weights retained for the selected Fourier modes.

The final latent field $\hv^{n_L}$ defines the feature representation
$\boldsymbol{\Phi}=\hv^{n_L}\in\R^{n\times w}$, from which a linear readout produces the search direction $\dv$.
The FNO architecture is illustrated in Figure~\ref{fig:fno}.

\begin{remark}
The standard FNO applies multidimensional Fourier transforms on structured grids, although extensions to irregular geometries exist~\cite{duruisseaux2512fourier}.
For finite element discretizations, an alternative is to apply a one-dimensional Fourier transform to the vector of basis coefficients, an approach referred to here as F-FNO.
\end{remark}

\begin{figure}[t]
\centering
\resizebox{\linewidth}{!}{%
\includegraphics{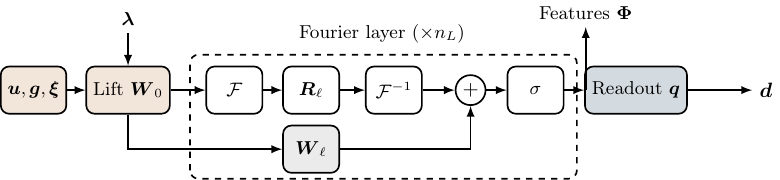}
}
\caption{Architecture of the FNO used to generate the SNAP search direction.}
\label{fig:fno}
\end{figure}

\subsection{Transolver}
\label{sec:transolver}
The second architecture considered in this work is Transolver~\cite{luo2025transolver,wu2024transolver,zhou2026transolver}, for which we employ the Transolver-3 variant~\cite{zhou2026transolver}.
Transolver is a NO architecture that performs self-attention over a reduced set of learned \emph{physical states}, rather than directly over all mesh points.
In our setting, the nodal inputs are stacked channel-wise into a feature field $a\in\R^{n\times c}$, as for the FNO.

The input is first lifted to a width-$w$ latent representation and combined with the problem parameters through a learned embedding, i.e., 
\begin{align}
\xv^{0}
=
\mathrm{MLP}(a)
+
\boldsymbol{\lambda}\Ev^{T},
\label{eq:transolver_lift}
\end{align}
where $\mathrm{MLP}$ denotes the multilayer perceptron.

The latent representation $\xv^{0}$ is then propagated through $n_L$ Transolver blocks, given as
\begin{align}
\xv^{\ell+1/2}
&=
\xv^{\ell}
+
\pazocal{A}\bigl(\mathrm{LN}(\xv^{\ell})\bigr),
\nonumber\\
\xv^{\ell+1}
&=
\xv^{\ell+1/2}
+
\mathrm{MLP}\bigl(\mathrm{LN}(\xv^{\ell+1/2})\bigr),
\qquad
\ell=0,\ldots,n_L-1,
\label{eq:transolver_block}
\end{align}
where $\pazocal{A}$ denotes the physics-attention module and $\mathrm{LN}$ denotes layer normalization.

To describe the physics-attention module~$\pazocal{A}$, let us define $\yv^\ell=\mathrm{LN}(\xv^\ell)\in\R^{n\times w}$.
The latent features are first projected and rearranged into $H$ attention heads,
\begin{align}
\zv^\ell
=
\mathrm{rearrange}\left(
\yv^\ell\Pv_\ell^T
\right)
\in\R^{H\times n\times C},
\label{eq:transolver_projection}
\end{align}
where $\Pv_\ell\in\R^{HC\times w}$ is learnable and $C$ denotes the hidden dimension of each head.
In the following, the head index is suppressed for readability.

Within each head, the mesh points are assigned to $M_s$ physical states through softmax weights, i.e., 
\begin{align}
\boldsymbol{\alpha}^{\ell}
&=
\mathrm{softmax}
\left(
\zv^{\ell}\Wv_{w_\ell}^{T} / 
\tau^\ell
\right),
\qquad
\tau^\ell
=
\mathrm{MLP}(\zv^\ell),
\nonumber\\
\sv^{\ell}
&=
(\Dv^{\ell})^{-1}
(\boldsymbol{\alpha}^{\ell})^{T}
\zv^{\ell}\Wv_{p_\ell}^{T},
\label{eq:transolver_slice}
\end{align}
where $\Wv_{w_\ell}\in\R^{M_s\times C}$ and $\Wv_{p_\ell}\in\R^{C\times C}$ are learnable weights, and
$\Dv^\ell_{jj} = \sum_{i=1}^{n} \boldsymbol{\alpha}^\ell_{ij}$.
The resulting sliced tokens $\sv^\ell\in\R^{M_s\times C}$ represent the learned physical states.

Self-attention is then performed over these physical states as
\begin{align}
\widetilde{\sv}^{\ell}
=
\mathrm{softmax}
\left(
\frac{\Qv^{\ell}(\Kv^{\ell})^{T}}{\sqrt{C}}
\right)
\Vv^{\ell},
\label{eq:transolver_attention}
\end{align}
where
$\Qv^{\ell} = \sv^{\ell}\Wv_{Q_\ell}^{T}$, $\Kv^{\ell} = \sv^{\ell}\Wv_{K_\ell}^{T}$, $\Vv^{\ell} = \sv^{\ell}\Wv_{V_\ell}^{T}$, with $\Wv_{Q_\ell},\Wv_{K_\ell},\Wv_{V_\ell}\in\R^{C\times C}$ being learnable.
The attended physical states are mapped back to the mesh points using the same assignment weights, thus
\begin{align}
\pazocal{A}(\yv^\ell)
=
\mathrm{rearrange}\left(
\boldsymbol{\alpha}^{\ell}\,\widetilde{\sv}^{\ell}
\right)
\Wv_{r_\ell}^{T},
\label{eq:deslice}
\end{align}
where $\boldsymbol{\alpha}^{\ell}\widetilde{\sv}^{\ell}\in\R^{n\times C}$ is computed per head, the heads are concatenated by $\mathrm{rearrange}$ into $\R^{n\times HC}$, and $\Wv_{r_\ell}\in\R^{w\times HC}$ is again learnable.

The output $\xv^{n_L}$ of the final Transolver block defines the latent feature representation
$\boldsymbol{\Phi}:=\xv^{n_L}\in\R^{n\times w}$, from which a linear readout produces the search direction $\dv$.
The employed Transolver architecture is illustrated in Figure~\ref{fig:transolver}.

\begin{figure}[t]
\centering
\resizebox{\linewidth}{!}{%
\includegraphics{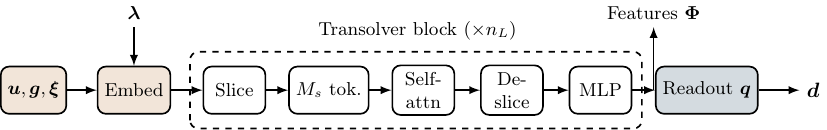}
}
\caption{Architecture of the Transolver used to generate the SNAP search direction.}
\label{fig:transolver}
\end{figure}

\subsection{Dataset construction and training}
\label{subsec:training_ons}
To train NOs, we first construct a set of reference solutions
$\{(\boldsymbol{\lambda}_j,\uv_j^\star)\}_{j=1}^{N_s}$
by sampling the parameter space $\Lambda$ and solving~\eqref{eq:dp_merit} with a Newton method, initialized with zero\footnote{For the Bratu--Gelfand and nonlinear Helmholtz problems, this initialization selects the stable solution branch.}.
For a given training iterate $\uv$ associated with $\boldsymbol{\lambda}_j$, we evaluate
${\gv=\nabla\Psi(\uv;\boldsymbol{\lambda}_j)}$
and determine the NO's parameters by solving the following minimization problem:
\begin{align}
\thetav^\ast
\in
\operatorname*{arg\,min}_{\thetav\in\R^{n_w}}
\omega\L_{\mathrm{obj}} + \L_{\mathrm{err}},
\end{align}
where $\omega \in \R$ and $\L_{\mathrm{obj}}$ and $\L_{\mathrm{err}}$ are given as
\begin{align}
\L_{\mathrm{obj}}&:=\frac{1}{N_s}
\sum_{j=1}^{N_s}
\mathbb{E}_{\uv\sim\pazocal{S}(\uv_j^\star;\boldsymbol{\lambda}_j)}
\left[
\Psi\!\left(
\uv
+
\mathrm{NO}_{\thetav}
(\uv,\gv;\boldsymbol{\xi},\boldsymbol{\lambda}_j);
\boldsymbol{\lambda}_j
\right)
-
\Psi(\uv_j^\star;\boldsymbol{\lambda}_j)
\right], \nonumber\\
\L_{\mathrm{err}}&:=\frac{1}{N_s}
\sum_{j=1}^{N_s}
\mathbb{E}_{\uv\sim\pazocal{S}(\uv_j^\star;\boldsymbol{\lambda}_j)}
\left( \uv+ \mathrm{NO}_{\thetav}
(\uv,\gv;\boldsymbol{\xi},\boldsymbol{\lambda}_j) - \uv_{j}^{\star} \right)^{2}. \nonumber
\label{eq:training_formula}
\end{align}
This training loss therefore corresponds to the optimality gap~\eqref{eq:dp_eps} and the  Euclidean solution match.
Note that since $\Psi(\uv_j^\star;\boldsymbol{\lambda}_j)$ is independent of $\thetav$, this term may be omitted during training without changing the minimizer.

Training with~\eqref{eq:training_formula} requires differentiating the discrete objective $\Psi$ at the corrected iterate with respect to the NO output.
For the finite element problems considered in this work (detailed in Section~\ref{sec:impl}), these derivatives are computed using the automatic differentiation capabilities of Firedrake~\cite{rathgeber2016firedrake}.
Importantly, this allows the NO to be trained with the objective loss and the solution error simultaneously rather than with the Euclidean solution error only.
As shown in Section~\ref{sec:impact_training}, the latter is equivalent to the defect only up to the local spectral bounds and may therefore be less suitable for ill-conditioned problems. 

\subsubsection{Sampling}
\label{sec:sampling}
The distribution $\pazocal{S}(\uv_j^\star;\boldsymbol{\lambda}_j)$ generates training states representative of different stages of the nonlinear solve.
In particular, given an initial guess $\uv_j^{(0)}$ and reference solution $\uv_j^\star$, 
each iterate is obtained by one of the following sampling strategies: 
\begin{enumerate}[topsep=0pt]
    \item \textbf{Perturbing the initial guess}: adding random perturbations bounded by a maximum magnitude of $\eta$,
    \item \textbf{Sampling between the initial guess and the reference solution}: taking a convex combination $\alpha u_{j}^{(0)} + (1-\alpha) u_{j}^{\star}$ with a random coefficient $\alpha \in [0, 1]$ between the perturbed initial guess and the reference solution,
    \item \textbf{Perturbing the reference solution}: adding random noise scaled by $\eta$ relative to the range ($\max - \min$) of the reference solution, or
    \item \textbf{Sampling pre-smoothed iterates generated by $M_1$}: applying the pre-smoothing operator $M_1$ to the perturbed initial state to generate smoothed iterates.
\end{enumerate}
The probabilities $\rho_1,\ldots,\rho_4$ ($\sum_{j=1}^4\rho_j=1$) associated with each strategy, perturbation distributions, and noise levels are detailed in Section~\ref{sec:impl}.

\subsubsection{Input normalization}
The magnitudes of the iterate and gradient can vary substantially across problem parameters and stages of the nonlinear solve, making appropriate input scaling important for robust training and search direction prediction.
We therefore normalize the iterate and gradient as
\begin{align}
\hat{\uv}
=
\frac{\uv}{a_{\uv}},
\qquad
\hat{\gv}
=
\frac{\gv}{a_{\gv}},
\qquad
s_{\uv}
=
\log_{10}a_{\uv},
\qquad
s_{\gv}
=
\log_{10}a_{\gv},
\label{eq:input_normalization}
\end{align}
where
$a_{\uv}
=
\max\{\|\uv\|_\infty,\nu\}$, and
$a_{\gv}
=
\max\{\|\gv\|_\infty,\nu\}$,
with a small constant $\nu>0$ that prevents division by zero.
The nodal inputs are therefore
$(\boldsymbol{\xi},\hat{\uv},\hat{\gv},s_{\uv},s_{\gv})$,
together with the parameter embedding of $\boldsymbol{\lambda}$.
The log-magnitudes $s_{\uv}$ and $s_{\gv}$ retain the scale information removed by normalization, allowing the NO to predict search directions of appropriate magnitude.
For brevity, $\mathrm{NO}_{\thetav}(\uv,\gv;\boldsymbol{\xi},\boldsymbol{\lambda})$ denotes evaluation with this normalization throughout the manuscript.

\subsubsection{Enforcing Dirichlet boundary conditions}
To ensure that the NO search direction preserves the prescribed Dirichlet data, the constrained dofs are eliminated from the search direction.
Let $\mv\in\{0,1\}^n$ denote the indicator of the unconstrained dofs and $\boldsymbol{\Phi}\in\R^{n\times w}$ the latent feature field produced by the NO.
We mask the latent features as $\widetilde{\boldsymbol{\Phi}} := \operatorname{diag}(\mv)\boldsymbol{\Phi}$,
and apply a bias-free linear readout with $\qv\in\R^{w}$, i.e., $\dv = \widetilde{\boldsymbol{\Phi}}\qv = \operatorname{diag}(\mv)\boldsymbol{\Phi}\qv$.
Thus, $\dv$ vanishes on the Dirichlet dofs, ensuring that the corrected iterate $\uv+\dv$ satisfies the prescribed boundary conditions.
The same mask is applied to the gradient input, while the iterate input is expected to satisfy the prescribed Dirichlet data.
\section{Benchmark Problems and Implementation Details}
\label{sec:impl}

\subsection{Benchmark Problems}
We consider six nonlinear benchmark problems, which we use to evaluate the performance our SNAP framework.
The minimization problems are understood in the local sense, i.e., $u^\star(\lambda)$ denotes a local minimizer of $\Psi(\cdot;\lambda)$.

Throughout, the spatial domain is the unit square $\Omega:=(0,1)^2$ with boundary $\partial\Omega$, and $\partial_n(\cdot)$ denotes the outward normal derivative on $\partial\Omega$.
Moreover, the symbol $\pazocal{U}([a,b])$ denotes the uniform distribution on $[a,b]$.

\subsubsection{Minimal Surface (MS)}
\label{sec:min_surf}

We begin with the parametric minimal surface problem~\cite{gilbarg2001elliptic},
\begin{equation}
\min_{u\in\pazocal{Z}_\lambda}
\Psi(u;\lambda)
:=
\int_{\Omega}
\sqrt{1+\|\nabla u\|^2}
\,dx,
\label{eq:min_surf}
\end{equation}
where
$\pazocal{Z}_\lambda
:=
\left\{
u\in H^1(\Omega)
\ \middle|\
u=g_\lambda
\text{ on }\partial\Omega
\right\}$.
The boundary conditions are prescribed via
$ g_\lambda(x)
=
\sin(\lambda\pi x_1)
\cos(\lambda\pi x_2)$,  where $x=(x_1,x_2)$.
The parameter $\lambda$ controls the oscillation frequency of the prescribed boundary data and hence the geometry of the resulting minimal surface.
We sample  $\lambda$ as $\lambda\sim\pazocal{U}([1,4])$.

\subsubsection{Bratu--Gelfand Problem (BG)}
\label{sec:bratu}
Next, we consider the Bratu--Gelfand problem~\cite{bratu1914equations,gelfand1963some}, defined by
\begin{equation}
\min_{u\in H_0^1(\Omega)}
\Psi(u;\lambda)
:=
\int_\Omega
\left(
\frac{1}{2}\|\nabla u\|^2
-
\lambda e^u
\right)
\,dx.
\label{eq:bratu_energy}
\end{equation}
Here, the parameter $\lambda>0$ controls the strength of the exponential nonlinearity.
For purpose of this work, we sample $\lambda\sim\pazocal{U}([0.5,3])$.
Note that $\Psi(\cdot;\lambda)$ is unbounded from below for every $\lambda>0$, since the exponential term dominates the gradient term for large $u$.
We therefore seek the stable, lower-branch solution of the associated Euler--Lagrange equation
\begin{equation}
-\Delta u=\lambda e^{u}\ \text{ in }\Omega,
\qquad
u=0\ \text{ on }\partial\Omega,
\label{eq:bratu}
\end{equation}
which is a strict local minimizer of $\Psi(\cdot;\lambda)$ for $\lambda$ below the fold value $\lambda\approx6.81$.

\subsubsection{Nonlinear Poisson (NP)}
\label{sec:nlpoisson}
Our next example is a nonlinear Poisson problem with solution-dependent conductivity.
Defining
$F_\lambda(u):=\lambda u+\frac{u^3}{3}$,
we consider the following minimization problem:
\begin{equation}
\min_{u\in\pazocal{Z}}
\Psi(u;\lambda)
:=
\int_\Omega
\left[
\frac{1}{2}
\left\|
\nabla F_\lambda(u)
\right\|^2
-
F_\lambda(u)
\right]
\,dx,
\label{eq:nlpoisson_energy}
\end{equation}
where $ \pazocal{Z}
:=
\left\{
u\in H^1(\Omega)
\ \middle|\
u=1
\text{ on }\Gamma_{\mathrm D}
\right\}$,
and 
$\Gamma_{\mathrm D}
:=
\left\{
x\in\partial\Omega
\ \middle|\
x_1=1
\right\}$.
The parameter $\lambda$ determines the baseline conductivity.
As $\lambda\to0$, the conductivity becomes increasingly dominated by the solution-dependent contribution $u^2$.
Here, we parametrize the problem by sampling $\lambda$ as $\lambda\sim\pazocal{U}([10^{-3},1])$.

\subsubsection{Nonlinear Helmholtz (NH)}
\label{sec:helmholtz}

As a next example, we utilize the cubically nonlinear Helmholtz problem,
\begin{equation}
\min_{u\in H_0^1(\Omega)}
\Psi(u;\lambda)
:=
\int_\Omega
\left(
\frac{1}{2}\|\nabla u\|^2
-
\frac{k^2}{2}u^2
-
\frac{\lambda}{4}u^4
-
u
\right)
\,dx.
\label{eq:helmholtz_energy}
\end{equation}
For this parametric problem, we fix the wavenumber at $k=4$ and sample ${\lambda\sim\pazocal{U}([0.1,5])}$, where $\lambda$ controls the strength of the cubic nonlinearity.
For $ \lambda> 0$, the energy $\Psi(\cdot;\lambda)$ is unbounded from below due to its negative quartic term.
Since $k^2=16<2\pi^2$, the quadratic part of $\Psi$ is coercive, ensuring a branch of strict local minimizers for sufficiently small $\lambda>0$.
We therefore seek solutions on this branch for the sampled parameter values, satisfying
\begin{equation}
\begin{aligned}
-\Delta u-k^{2}u-\lambda u^{3} &=1,
\quad \text{in }\Omega,\ \\
u&=0,
\quad \text{on }\partial\Omega.
\end{aligned}
\label{eq:helmholtz_el}
\end{equation}

\subsubsection{Allen--Cahn (AC)}
\label{sec:allencahn}

Next, we utilize the forced Allen--Cahn problem, given as
\begin{equation}
\min_{u\in H_0^1(\Omega)}
\Psi(u;\lambda)
:=
\int_\Omega
\left(
\frac{\varepsilon^2}{2}\|\nabla u\|^2
+
\frac{1}{4}u^4
-
\frac{1}{2}u^2
-
\lambda f u
\right)
\,dx.
\label{eq:allencahn_energy}
\end{equation}
We set
$\varepsilon=0.3$
and
$f(x)=\sin(\pi x_1)\sin(\pi x_2)$.
The bistable potential renders the energy nonconvex, while $\lambda$ controls the forcing amplitude.
We sample $\lambda\sim\pazocal{U}([0.1,5])$.

\subsubsection{Jumping-Coefficient \texorpdfstring{$p$}{p}-Laplacian (JC)}
\label{sec:plaplacian}

Finally, we employ a regularized $p$-Laplacian with a discontinuous diffusion coefficient, given as
\begin{equation}
\min_{u\in H_0^1(\Omega)}
\Psi(u;\lambda)
:=
\int_\Omega
\left[
\frac{K(x)}{p}
\left(
\varepsilon^2+\|\nabla u\|^2
\right)^{p/2}
-
\lambda f u
\right]
\,dx.
\label{eq:plaplacian_energy}
\end{equation}
We set
$p=4$,
$\varepsilon=0.1$,
and
$f(x)=\sin(\pi x_1)\sin(\pi x_2)$.
The coefficient $K$ is piecewise constant on a $2\times2$ checkerboard, in particular we choose $K$ as
\begin{equation}
K(x)
=
\begin{cases}
K_{\mathrm{high}},
&
\sin(2\pi x_1)\sin(2\pi x_2)\ge0,
\\[2pt]
K_{\mathrm{low}},
&
\text{otherwise},
\end{cases}
\label{eq:checkerboard}
\end{equation}
with
$K_{\mathrm{high}}=100$
and
$K_{\mathrm{low}}=1$.
The parameter $\lambda$ that controls the forcing amplitude is sampled as
$\lambda\sim\pazocal{U}([1,50])$.

\subsection{Implementation Details}
\label{subsec:implementation}
We use the Firedrake framework~\cite{rathgeber2016firedrake} for the finite element (FE) discretization of all benchmark problems, dataset generation, and automatic differentiation of the discrete objective loss~\eqref{eq:training_formula}.
All problems are approximated on quasi-uniform meshes using Lagrange $\mathbb{P}_1$ finite elements, with the parameter $\lambda$ sampled independently from the corresponding distributions specified above.
Reference solutions are computed using Newton's method with LS globalization.

The NOs are implemented in PyTorch~\cite{paszke2019pytorch} and trained using the SOAP optimizer~\cite{ICLR2025_e9886640}.
Training states are generated using the sampling strategy described in Section~\ref{subsec:training_ons} with $\eta=0.01, \rho_{1}=0.2, \rho_{2}=0.3, \rho_{3}=0.2$, and $\rho_{4}=0.3$.
The loss function~\eqref{eq:training_formula} is used with $\omega=0.01$ or $\omega=0$ (Solution-match only).
Moreover, we use a learning rate of $10^{-3}$, and a weight decay of $5\times10^{-4}$.
Training is terminated if the validation loss does not improve for $100$ consecutive epochs.
NO's architectures, associated dataset sizes, and training times are summarized in Appendix~\ref{sec:appB}.
Note, no extensive hyperparameter tuning is performed, suggesting that the reported results could be further improved through systematic hyperparameter optimization.

The SNAP algorithm\footnote{The developed code~\cite{precond_git_nonlinear} will be made publicly available upon acceptance of the manuscript.} is implemented using PETSc~\cite{balay2019petsc} through the petsc4py interface.
The numerical experiments are performed on the Oscar supercomputer at Brown University\footnote{Each computing node is equipped with an AMD EPYC 9554 64-Core Processor (256 GB) and an NVIDIA L40S GPU (48 GB).} and the Jean-Zay supercomputer at IDRIS\footnote{Each computing node is equipped with eight NVIDIA V100 GPUs (32 GB each) and a 24-core processor (360 GB).}.
\section{Numerical results}
\label{sec:num_results}

In this section, we evaluate the performance of the proposed SNAP framework on the six benchmark problems introduced in Section~\ref{sec:impl}. 
We consider SNAP equipped with three NOs, namely F-FNO, FNO, and Transolver (TS), whose configurations are summarized in Table~\ref{tab:architecture}. 
The performance of all methods is measured by the number of iterations required to reach the prescribed stopping tolerance; a solver is marked as FAILED if it does not converge within $20{,}000$ iterations.
For each problem, we uniformly sample five values of $\lambda$ from the interval specified in Section~\ref{sec:impl}, and report the mean and standard deviation (std) of the iteration counts over five runs.

\subsection{Impact of the training loss on SNAP convergence}
\label{sec:effect_loss}

\begin{table}
   \centering
   \begin{tabular}{c||c|c}
   \toprule
    Ex./Loss choice &  $\omega\L_{\mathrm{obj}} + \L_{\mathrm{err}}$ &$ \L_{\mathrm{err}}$ \\ \midrule\midrule
    MS & $756.0 \pm 107.5$ & $\mathbf{724.8} \pm 157.1$  \\ \midrule
    BG & $110.4 \pm 42.3$ & $\mathbf{107.2} \pm 22.4$   \\ \midrule
    NH & $\mathbf{1{,}358.6} \pm 278.5$ & $3{,}844.0 \pm 2018.9$  \\ \midrule
    NP & $\mathbf{338.6} \pm 26.9$ & $4{,}268.8 \pm 7865.6$ \\ \midrule
    AC & $\mathbf{356.8} \pm 293.7$ & $1{,}325.4 \pm 2340.2$  \\ \midrule
    JC & $\mathbf{219.3} \pm 41.2$ & $281.6 \pm 9.2$ \\ \midrule
   \bottomrule
   \end{tabular}
   \caption{Mean and std of the number of iterations required for SNAP with F-FNO to converge, as a function of the NO training loss.}
   \label{tab:ablation_loss}
\end{table}

We first investigate the impact of the training loss on SNAP convergence.
Proposition~\ref{cor:dp_training} motivates objective-based training, as it directly targets the correction defect governing the reduction achieved by an accepted unit step, whereas the Euclidean solution-match loss controls this defect only up to the spectral constants $\underline{\lambda}$ and $\overline{\lambda}$.
We compare SNAP(F-FNO) trained with the combined energy and solution-match loss against solution-match loss alone ($\omega=0$), reporting iteration counts in Table~\ref{tab:ablation_loss}.

As we can see, including the energy term clearly improves the performance on four of the six problems, whereas the solution-match loss alone performs slightly better on MS and BG. 
For MS ($756.0$ vs.\ $724.8$) and BG ($110.4$ vs.\ $107.2$), however, the differences are well within one std and therefore not conclusive.
The differences are large for NP, AC, and NH. 
The energy-based training reduces the iteration count from $4{,}268.8$ to $338.6$ on NP ($\approx 12.6\times$), from $1{,}325.4$ to $356.8$ on AC ($\approx 3.7\times$), and from $3{,}844.0$ to $1{,}358.6$ on NH ($\approx 2.8\times$). 
For NP and AC, the std of the solution-match variant exceeds its mean, which indicates that the solution-match-trained NO provides a poor search direction at particular values of $\lambda$, leading to very slow convergence for those parameters, whereas the energy-based training yields a much more consistent behavior across the parameter range $\Lambda$.
The improvement is smaller but systematic for JC ($\approx 1.3\times$), where the difference exceeds the stds of both variants. 
Overall, the energy-based loss improves performance on most problems without substantially degrading it on the others.
We therefore adopt it for all subsequent experiments.

\subsection{Convergence of SNAP as a standalone solver}
\label{sec:effect_no}
Next, we investigate the benefit of the NO's search direction when SNAP is used as a standalone solver. 
As baselines, we use standard gradient descent~(GD) and the nonlinear Jacobi preconditioned gradient descent~(NJ-GD), in which the gradient is scaled by the diagonal of the Hessian $\nabla^2\Psi$.  
The results are summarized in Table~\ref{tab:ablation_neural_operator}.

Across all benchmark problems, SNAP requires substantially fewer iterations than both baselines.
For the MS, BG, and AC problems, the reduction in the mean iteration count relative to GD ranges from approximately $2\times$ to $27\times$ across the SNAP variants. 
For instance, SNAP(F-FNO) reduces the number of iterations from $6{,}281.8$ to $756.0$ on MS ($\approx 8.3\times$) and from $2{,}972.2$ to $110.4$ on BG ($\approx 27\times$), while SNAP(FNO) reduces it from $1{,}202.6$ to $220.8$ on AC ($\approx 5.4\times$). 
An even larger reduction is observed for the NH problem, where SNAP(TS) and SNAP(F-FNO) require $177.2$ and $1{,}358.6$ iterations, respectively, compared to $15{,}638.8$ for GD ($\approx 88\times$ and $\approx 12\times$). 
Moreover, the NJ-GD provides only mild improvement over GD on BG, NH, and AC (approximately $11-14\%$ fewer iterations), indicating that diagonal scaling alone is insufficient to capture the global structure of the error.

SNAP's benefits are particularly pronounced on the most challenging problems.
 For example, for NP, GD fails to converge, while SNAP reduces the iteration count by $8.3-9.6\times$ relative to NJ-GD.
 For JC, both GD and NJ-GD fail to converge within the iteration budget, whereas all SNAP variants converge in approximately $219-356$ iterations.
 These results demonstrate that learned corrections can enable convergence where the underlying optimization methods alone are ineffective.
 
We also observe that the effectiveness of SNAP depends on the NO architecture, and no single architecture is uniformly best.
F-FNO yields the lowest iteration counts on MS, BG, and JC, FNO on NP and AC, and TS on NH. 
The architecture dependence is most pronounced on NH, where the iteration counts range from $177.2$ for SNAP(TS) to $2{,}395.4$ for SNAP(FNO). 
We further note that the stds are sizable for several configurations (e.g., SNAP(F-FNO) and SNAP(TS) on AC, and SNAP(FNO) on JC), which indicates that the quality of the learned search direction varies across the parameter range $\Lambda$.
 
Figure~\ref{fig:convergence_allen_cahn_lambda} shows the convergence histories of all considered methods for the AC problem with $\lambda=4.09$.
The SNAP variants reduce both the gradient norm and the objective function substantially faster than GD and NJ-GD.
The observed objective function decreases monotonically, consistent with Lemma~\ref{lem:dp_global}.

\begin{table}
   \centering
      \resizebox{\linewidth}{!}{
   \begin{tabular}{c||c|c|c|c|c}
   \toprule
     \multirow{2}{*}{  Ex.} &\multirow{2}{*}{ GD}  &  \multirow{2}{*}{ NJ-GD}  & \multicolumn{3}{c}{SNAP (NJ-GD, NO)} \\ \cline{4-6}
     &  & & F-FNO & FNO & TS \\ \midrule\midrule
    MS & $6{,}281.8 \pm 912.9$  & $2{,}970.4 \pm 1{,}855.8$ & $\mathbf{756.0} \pm 107.5$ & $ 794.8 \pm 155.0$ & $908.0 \pm 137.3$ \\ \midrule
    BG & $2{,}972.2 \pm 150.5$ & $2{,}577.9 \pm 128.5$ & $\mathbf{110.4} \pm 42.3$ & $202.0 \pm 110.0$ & $431.4 \pm 107.9$  \\ \midrule
    NH & $15{,}638.8 \pm 2{,}818.8$ & $13{,}473.8 \pm 2{,}682.9$ & $1{,}358.6 \pm 278.5$ & $2{,}395.4 \pm 530.4$ & $\mathbf{177.2} \pm 60.3$  \\ \midrule
    NP & FAILED &  $2{,}799.2 \pm 249.9$ & $338.6 \pm 26.9$ & $\mathbf{291.4} \pm 21.8$ & $294.0 \pm 34.9$ \\ \midrule
    AC & $1{,}202.6 \pm 456.7$ & $1{,}068.6 \pm 707.2$ & $356.8 \pm 293.7$ & $\mathbf{220.8} \pm 112.7$ & $557.2 \pm 458.2$ \\ \midrule
    JC & FAILED & FAILED & $\mathbf{219.3} \pm 41.2$ & $356.2 \pm 217.7$ & $261.8 \pm 34.9$ \\ \midrule
   \bottomrule
   \end{tabular}}
   \caption{The mean and std of the required number of iterations until convergence of the standard gradient descent (GD), the nonlinear Jacobi (the gradient is scaled with the diagonal of the Hessian)  preconditioned gradient descent~(NJ-GD), and SNAP methodology with NJ-GD hybridized by F-FNO, FNO, and TS, denoted as SNAP(F-FNO), SNAP(FNO), and SNAP(TS), respectively.}
   \label{tab:ablation_neural_operator}
\end{table}

\begin{figure}
    \centering
\includegraphics{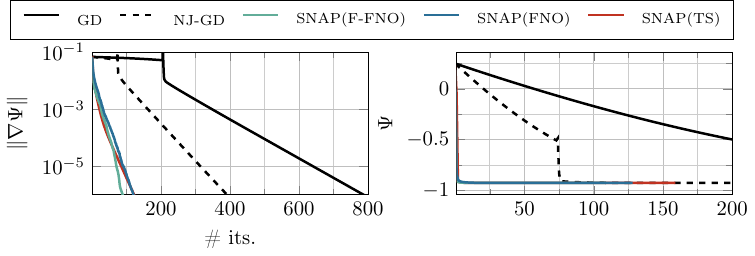}
	\caption{Convergence of GD variants solving AC example with $\lambda=4.09$.}
    \label{fig:convergence_allen_cahn_lambda}
\end{figure}

\subsection{SNAP as a nonlinear preconditioner}
\label{sec:precond}
In the next experiment, we use SNAP as a nonlinear preconditioner for nonlinear GMRES~(N-GMRES) with history size $30$. 
We compare unpreconditioned N-GMRES, N-GMRES preconditioned by the NJ-GD, and SNAP, which hybridizes NJ-GD with the three considered NOs. 
Table~\ref{tab:comparison_ngmres} reports the iteration counts.
 
Compared to the NJ-GD preconditioner, the SNAP preconditioner reduces the number of iterations on four of the six problems. 
For MS, the reduction is $1.9$--$2.0\times$ (from $162.0$ to $79.8$--$86.4$); for NP, it is $1.7$--$2.1\times$ (from $213.6$ to $99.4$--$129.0$); for AC, it is $1.7$--$1.9\times$ (from $106.2$ to $56.6$--$63.6$); and for JC, it is $1.4$--$1.7\times$ (from $141.4$ to $82.4$--$99.8$). 
Moreover, unpreconditioned N-GMRES fails on JC, whereas all SNAP-preconditioned variants converge in approximately $100$ iterations; see also the convergence histories in Figure~\ref{fig:convergence_jumping_p_laplacian_lambda}.
The ranking of the NO architectures also differs from the standalone setting of Table~\ref{tab:ablation_neural_operator}. 
There, TS was the best-performing architecture only on NH, whereas here SNAP(TS) yields the lowest iteration counts for BG, NP, AC, and JC problems. 
This indicates that the ranking of the NO architectures in the standalone setting does not necessarily carry over to the preconditioned setting.

For BG and NH, SNAP does not clearly improve performance. 
On BG, N-GMRES, and the preconditioned variant with NJ-GD, SNAP(F-FNO), and SNAP(FNO) preconditioners all converge in approximately $53$--$56$ iterations, so there is little room for improvement. 
On NH, N-GMRES and its NJ-GD preconditioned counterpart already converge in $56$--$58$ iterations, while the SNAP-preconditioned variants require more iterations. 
NH is therefore the case where the additional cost of the NO-based preconditioner does not pay off in terms of iteration counts. 
We also note that the iteration counts reported here do not account for the per-iteration cost, which is higher for SNAP than for the NJ-GD preconditioner, since each application of the preconditioner involves an NO evaluation in addition to the NJ-GD smoothing.

\begin{table}
   \centering
   \resizebox{\linewidth}{!}{
   \begin{tabular}{l||c|c|c|c|c}
   \toprule 
     \multirow{2}{*}{  Ex.} &\multirow{1}{*}{N-GMRES}  &  \multirow{1}{*}{N-GMRES}  & \multicolumn{3}{c}{N-GMRES (SNAP (NJ-GD, NO))} \\ \cline{4-6}   
     & (None) & (NJ-GD) & F-FNO & FNO & TS \\ \midrule\midrule
    MS  & $191.2 \pm 28.8$ & $162.0 \pm 4.5$ & $81.8 \pm 5.4$ & $\mathbf{79.8} \pm 7.4$ & $86.4 \pm 5.8$ \\ \midrule
    BG  & $55.6 \pm 0.5$ & $55.4 \pm 0.5$ & $54.0 \pm 2.8$ & $53.4 \pm 1.5$ & $\mathbf{45.2} \pm 5.9$  \\ \midrule
    NH  & $58.0 \pm 0.0$ & $\mathbf{56.2} \pm 0.4$ & $70.0 \pm 3.8$ & $64.2 \pm 3.1$ & $75.6 \pm 0.8$  \\ \midrule
    NP  & $333.4 \pm 109.1$ & $213.6 \pm 48.1$ & $129.0 \pm 21.4$ & $102.0 \pm 10.4$ & $\mathbf{99.4} \pm 6.6$ \\ \midrule
    AC & $226.8 \pm 147.8$ & $106.2 \pm 49.1$ & $63.6 \pm 13.9$ & $61.0 \pm 9.5$ & $\mathbf{56.6} \pm 10.1$ \\ \midrule
    JC & FAILED & $141.4 \pm 17.0$ & $88.0 \pm 8.9$ & $99.8 \pm 13.3$ & $\mathbf{82.4} \pm 4.4$ \\ \midrule
   \bottomrule
   \end{tabular}
   }
   \caption{The mean and std of the required number of iterations until convergence of the N-GMRES with the history size $30$, the nonlinear Jacobi preconditioned N-GMRES, and the preconditioned N-GMRES using SNAP methodology with F-FNO, FNO, and TS, denoted as SNAP(F-FNO), SNAP(FNO), and SNAP(TS), respectively.}
   \label{tab:comparison_ngmres}
\end{table}

\begin{figure}
    \centering
\includegraphics{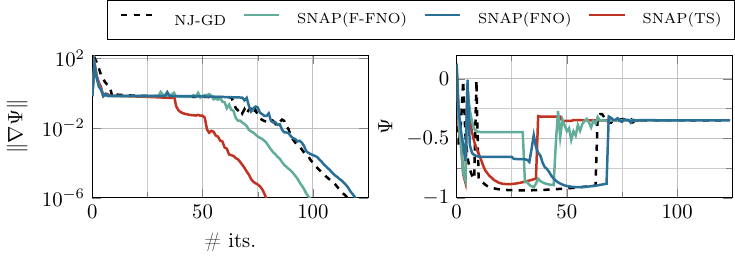}
    \caption{Convergence of N-GMRES with NJ-GD and SNAP (NJ-GD, NO) preconditioners for JC example with $\lambda=40.85$.}
    \label{fig:convergence_jumping_p_laplacian_lambda}
\end{figure}

\subsection{Scalability of SNAP with increasing resolution}
\label{sec:scalability}
 
Finally, we investigate SNAP's performance as a nonlinear preconditioner under mesh refinement for the NP problem.
 We vary the number of unknowns per axis, $N_x$, and consider three parameter subintervals, 
 $\lambda\in[10^{-3},10^{-2}]$, $[10^{-2},10^{-1}]$, and $[10^{-1},1]$, 
 corresponding to progressively weaker solution-dependent nonlinearity.
 We compare N-GMRES without preconditioning against N-GMRES preconditioned by NJ-GD, SNAP(F-FNO), and SNAP(TS).
 Figure~\ref{fig:scalability} presents the results.
 
The number of iterations for N-GMRES and NJ-GD preconditioned variant increases with mesh resolution, especially for small $\lambda$, where the problem is more nonlinear and ill-conditioned. 
In contrast, the SNAP-preconditioned methods require far fewer iterations and remain below the baselines across the considered resolutions. 
This behavior holds across all three subintervals of $\lambda$ and indicates that the learned search direction generalizes across mesh resolutions.

\begin{figure}
    \centering
\includegraphics{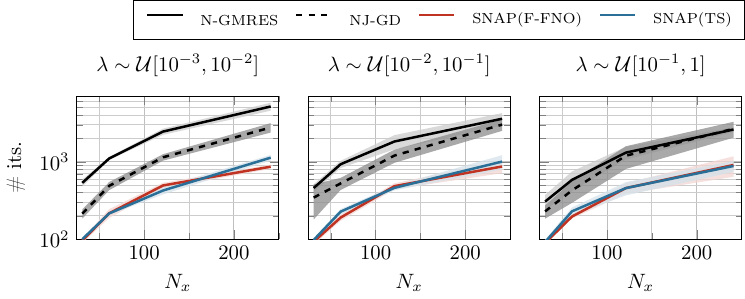}
	\caption{The average number of iterations required to solve the NP example as the number of dofs increases ($n=(N_X)^2$). We uniformly sample five $\lambda$ values from the three subintervals $[10^{-3}, 10^{-2}]$, $[10^{-2}, 10^{-1}]$, and $[10^{-1}, 1]$ and report the average number of iterations and the corresponding confidence interval. Note that NJ-GD, SNAP(F-FNO), and SNAP(TS) are used as preconditioners for N-GMRES.}
\label{fig:scalability}
\end{figure}
\section{Conclusion}
 \label{sec:conclusion}
We introduced the Safeguarded Neurally-Accelerated Preconditioning (SNAP) framework, which integrates pre-trained neural operators (NOs) into classical nonlinear optimization methods to accelerate the solution of parametric minimization problems.
An a posteriori safeguard ensures global convergence and preserves the underlying optimization method's convergence rate, regardless of the NO architecture.
Our analysis further quantifies the influence of learned correction accuracy on convergence acceleration and establishes a direct connection to the NO training loss, providing a theoretical motivation for objective-based training.

We evaluated SNAP with F-FNO, FNO, and Transolver on six nonlinear parametric problems.
As a standalone solver, SNAP reduced gradient descent iteration counts by factors of approximately $2-88\times$ and converged in cases where the baseline failed.
As a nonlinear preconditioner for N-GMRES, SNAP reduced iteration counts by up to a factor of two on four benchmark problems, while achieving comparable performance on the remaining two.
We further demonstrated that these improvements persist under mesh refinement and generalize across problem parameters.

Although SNAP provides convergence guarantees, the acceleration achieved in practice depends on the problem and NO architecture.
Moreover, finite NO prediction accuracy may limit the effectiveness of learned corrections near convergence, increasing reliance on the underlying optimization method.
Our numerical evaluation is also restricted to two-dimensional problems and focuses on iteration counts rather than wall-clock performance.
Future work will investigate active learning and adaptive, solver-aware training strategies to improve the accuracy and effectiveness of learned corrections.
We also plan to explore hybridization with more advanced optimization methods, including Newton-type methods, extend SNAP to three-dimensional problems, and systematically assess its computational efficiency and scalability.
 \end{sloppypar}

\appendix
\section{Numerical Approximation Details}
\label{sec:appB}
This section summarizes the NO architectures (\Cref{tab:architecture}) and reports the dataset sizes, training epochs, and training times in~\Cref{table:training_params}.
 Here, an epoch consists of a single randomly sampled training instance rather than a full pass over the $N_S=2{,}000$ samples.
 Each epoch therefore requires only one forward and backward pass, followed by a convergence check.
 Consequently, training takes only a few seconds to approximately one minute, as reported in~\Cref{table:training_params}.
 
\begin{table}
\centering
\begin{tabular}{r||r|r|r|r|r|r}
\toprule
    &  \textbf{Layers} & \textbf{Modes} & \textbf{Widths} & \textbf{FFT} & \textbf{Act.} &\textbf{Params} \\ \midrule \midrule
{FNO}		& 3 & 1	& 128 & 2D & GELU & 247,425 \\ \midrule
{F-FNO}		& 3 & 1 & 128 & 1D & GELU & 149,121\\
\midrule\midrule
   &  \textbf{Layers} & \textbf{Heads} & \textbf{Channels} & \textbf{Slices} & \textbf{Act.} &\textbf{Params} \\ \midrule
{Transolver} 	& 3 & 4 & 128 & 32 & GELU & 296,940	 \\
\bottomrule
\end{tabular}
\caption{The summary of NO's architectures.}
\label{tab:architecture}
\end{table}

\begin{table}
\centering
\begin{tabular}{r||r|r|r|r}
\toprule
\multirow{2}{*}{}   		& \multirow{2}{*}{} 	& \multicolumn{3}{c}{\textbf{\textbf{Epoch / Time  (secs)}}}	\\ \cmidrule{3-5} \cmidrule{3-5}
   \textbf{Example} 		&  $\mathbf{N_S/Model}$	& \textbf{FNO} & \textbf{F-FNO} & \textbf{Transolver}  	\\ \midrule \midrule
MS				& 2,000				& 138 / 3.80 & 138 / 2.69 & 205 / 5.63			   	\\ \midrule
BG 			& 2,000				& $1{,}451$ / 38.76 & $1{,}262$ / 33.63 & $1{,}671$ / 30.55 		   			\\
\midrule
NH 	& 2,000	         & $1{,}298$ / 34.63 & $1{,}138$ / 21.91 & $1{,}451$ / 38.76	 		\\
\midrule
NP 				& 2,000				& 1,254 / 25.08 & 221 / 3.04 & 301 / 6.99 					 \\ \midrule
AC 				& 2,000				& $1{,}971$ / 53.96 & $1{,}959$ / 38.13 & $1{,}999$ / 54.99 					 \\
\midrule
JC				& 2,000				& 466 / 13.39 & 308 / 6.91 & 145 / 4.29 \\
\bottomrule
\end{tabular}
\caption{Summary of the number of samples ($N_S$), epoch, and training time for all benchmark problems.}
\label{table:training_params}
\end{table}

\bibliographystyle{siamplain}
\bibliography{biblio}

\end{document}